\newlength{\defbaselineskip}
\newcounter{marnote}
\newcommand{\setlinespacing}[1]%
           {\setlength{\baselineskip}{#1 \defbaselineskip}}
\theoremstyle{plain}
\newtheorem{theorem}{Theorem}[section]
\newtheorem{lemma}[theorem]{Lemma}
\newtheorem{prop}[theorem]{Proposition}
\theoremstyle{definition}
\newtheorem{definition}{Definition}[section]
\theoremstyle{remark}
\newtheorem{remark}{Remark}[section]
\numberwithin{equation}{section}
\begin{document}

\title{On the exterior Dirichlet problem for Hessian equations}

\author{Jiguang Bao\footnote{School of Mathematical Sciences, Beijing Normal University, Laboratory of Mathematics and Complex Systems, Ministry of
   Education, Beijing 100875,
China. Email: jgbao@bnu.edu.cn; hgli@bnu.edu.cn.}\quad Haigang
Li\footnotemark[1]\quad and\quad Yanyan Li\footnote{Department of
Mathematics, Rutgers University, 110 Frelinghuysen Rd, Piscataway,
NJ 08854, USA. Email: yyli@math.rutgers.edu.} }

\date{}

\maketitle

\begin{abstract}
In this paper, we establish a theorem on the existence of the
solutions of the exterior Dirichlet problem for Hessian equations
with prescribed asymptotic behavior at infinity. This extends a
result of Caffarelli and Li in \cite{cl} for the Monge-Amp\`{e}re
equation to Hessian equations.

\end{abstract}

\section{Introduction}

In this paper, we consider the solvability of the Dirichlet problem
for Hessian equations
\begin{equation}\label{sigmak}
\sigma_{k}(\lambda(D^{2}u))=1
\end{equation}
on exterior domains $\mathbb{R}^{n}\setminus{D}$, where $D$ is a
bounded open set in $\mathbb{R}^{n}$, $n\geq3$, $\lambda(D^{2}u)$
denotes the eigenvalues $\lambda_{1},\cdots,\lambda_{n}$ of the
Hessian matrix of $u$. Here
$$\sigma_{k}(\lambda)=\sum_{1\leq{i}_{1}<\cdots<i_{k}\leq{n}}\lambda_{i_{1}}\cdots\lambda_{i_{k}}$$
is the $k$-th elementary symmetric function of $n$ variations,
$k=1,\cdots,n$. Note that the case $k=1$ corresponds to the
Poisson's equation, which is a linear equation. There have been
extensive literatures on the exterior Dirichlet problem for linear
elliptic equations of second order, see \cite{ms} and the references
therein. For $2\leq{k}\leq{n}$, the Hessian equation \eqref{sigmak}
is an important class of fully nonlinear elliptic equations.
Especially, for $k=n$, we have the Monge-Amp\`{e}re equation
$\det(D^2u)=1$.

For the Monge-Amp\`{e}re equation, a classical theorem of
J\"{o}rgens (\cite{j}), Calabi (\cite{ce}), and Pogorelov (\cite{p})
states that any classical convex solution of $\det(D^2u)=1$ in
$\mathbb{R}^n$ must be a quadratic polynomial. A simpler and more
analytic proof, along the lines of affine geometry, was later given
by Cheng and Yau \cite{cy}. Caffarelli  \cite{c} extended the result
for classical solutions to viscosity solutions. Another proof of
this theorem was given by Jost and Yin in \cite{jx}. Trudinger and
Wang \cite{tw} proved that if $\Omega$ is an open convex subset of
$\mathbb{R}^{n}$ and $u$ is a convex $C^{2}$ solution of
$\det(D^{2}u)=1$ in $\Omega$ with
$\lim_{x\rightarrow\partial\Omega}u(x)=\infty$, then
$\Omega=\mathbb{R}^{n}$ and $u$ is quadratic.

Caffarelli and the third author \cite{cl} extended the
J\"{o}rgens-Calabi-Pogorelov theorem to exterior domains. They
proved that if $u$ is a convex viscosity solution of
$\det(D^{2}u)=1$ outside a bounded subset of $\mathbb{R}^{n}$,
$n\geq3$, then there exist a $n\times{n}$ real symmetric positive
definite matrix $A$, a vector $b\in\mathbb{R}^{n}$, and a constant
$c\in\mathbb{R}$ such that
\begin{equation}\label{asymptotic_cl}
\limsup_{|x|\rightarrow\infty}~
\left(|x|^{n-2}\left|~u(x)-\left(\frac{1}{2}x^{T}Ax+b\cdot{x}+c\right)~\right|\right)<\infty.
\end{equation}
With this prescribed asymptotic behavior at infinity, an existence
result for the exterior Dirichlet problem for the Monge-Amp\`{e}re
equation in $\mathbb{R}^{n}$, $n\geq3$, was also established in
\cite{cl}. In this paper, we will extend the existence theorem to
the Dirichlet problem for Hessian equations \eqref{sigmak} with
$2\leq{k}\leq{n}-1$ on exterior domains, with an appropriate
asymptotic behavior at infinity. In dimension two, similar problems
were studied by Ferrer, Mart\'{i}nez and Mil\'{a}n in
\cite{fmm1,fmm2} using complex variable method. See also Delano\"{e}
\cite{d}.

We remark that for the case that $A=c^{*}I$, where
$$ c^{*}=(C_{n}^{k})^{-1/k}, \ \ \ \ C_{n}^{k}=\frac{n!}{(n-k)!k!}, $$
$I$ is the $n\times{n}$ identity  matrix and $1\leq{k}\leq{n}$, the
exterior Dirichlet problem of Hessian equation \eqref{sigmak} has
been investigated in \cite{d1,db}. For interior domains, there have
been many well known results on the solvability of Hessian
equations. For instance, Caffarelli, Nirenberg and Spruck \cite{cns}
established the classical solvability of the Dirichlet problem,
Trudinger \cite{t2} proved the existence and uniqueness of weak
solutions, and Urbas \cite{u} demonstrated the existence of
viscosity solutions. Jian \cite{jian} studied the Hessian equations
with infinite Dirichlet boundary value conditions.

For readers' convenience, we recall the definition of viscosity
solutions to Hessian equations (see \cite{cc,u} and the references
therein). We say that a function
$u\in{C}^{2}(\mathbb{R}^{n}\setminus{\overline{D}})$ is admissible
(or $k$-convex) if $\lambda(D^{2}u)\in\overline{\Gamma}_{k}$ in
$\mathbb{R}^{n}\setminus{\overline{D}}$, where $\Gamma_{k}$ is the
connected component of
$\{\lambda\in\mathbb{R}^{n}~|~\sigma_{k}(\lambda)>0\}$ containing
$$\Gamma^{+}=\{\lambda\in\mathbb{R}^{n}~|~\lambda_{i}>0,i=1,\cdots,n\}.$$
It is well known that $\Gamma_{k}$ is a convex symmetric cone with
vertex at the origin. Moreover,
$$\Gamma_{k}=\{\lambda\in\mathbb{R}^{n}~|~\sigma_{j}(\lambda)>0,\ \mbox{for\ \ all}~j=1,\cdots,k\}.$$
See \cite{cns,t1}. Clearly, $\Gamma_{k}\subseteq\Gamma_{j}$ for
$k\geq{j}$, and $\Gamma_{1}$ is the half space
$\{\lambda\in\mathbb{R}^{n}~|~\lambda_{1}+\cdots+\lambda_{n}>0\}$,
while $\Gamma_{n}=\Gamma^{+}$. We use the following definitions,
which can be found in \cite{t0}.

Let $\Omega\subset\mathbb{R}^{n}$, we use $\mathrm{USC}(\Omega)$ and
$\mathrm{LSC}(\Omega)$ to denote respectively the set of upper and
lower semicontinuous real valued functions on $\Omega$.

\begin{definition}
A function $u\in\mathrm{USC}(\mathbb{R}^{n}\setminus\overline{D})$
is said to be a viscosity subsolution of equation \eqref{sigmak} in
$\mathbb{R}^{n}\setminus\overline{D}$ (or say that $u$ satisfies
$\sigma_{k}(\lambda(D^{2}u))\geq1$ in
$\mathbb{R}^{n}\setminus\overline{D}$ in the viscosity sense), if
for any function
$\psi\in{C}^{2}(\mathbb{R}^{n}\setminus\overline{D})$ and point
$\bar{x}\in\mathbb{R}^{n}\setminus\overline{D}$ satisfying
$$\psi(\bar{x})=u(\bar{x})\quad\mbox{and}\quad\psi\geq{u}~\mbox{on}~\mathbb{R}^{n}\setminus\overline{D},$$
we have
$$\sigma_{k}(\lambda(D^{2}\psi(\bar{x})))\geq1.$$
A function $u\in\mathrm{LSC}(\mathbb{R}^{n}\setminus\overline{D})$
is said to be a viscosity supersolution of \eqref{sigmak} in
$\mathbb{R}^{n}\setminus\overline{D}$ (or say that $u$ satisfies
$\sigma_{k}(\lambda(D^{2}u))\leq1$ in
$\mathbb{R}^{n}\setminus\overline{D}$ in the viscosity sense), if
for any $k$-convex function
$\psi\in{C}^{2}(\mathbb{R}^{n}\setminus\overline{D})$ and point
$\bar{x}\in\mathbb{R}^{n}\setminus\overline{D}$ satisfying
$$\psi(\bar{x})=u(\bar{x})\quad\mbox{and}\quad\psi\leq{u}~\mbox{on}~\mathbb{R}^{n}\setminus\overline{D},$$
we have
$$\sigma_{k}(\lambda(D^{2}\psi(\bar{x})))\leq1.$$
A function $u\in{C}^{0}(\mathbb{R}^{n}\setminus\overline{D})$ is
said to be a viscosity solution of \eqref{sigmak}, if it is both a
viscosity subsolution and supersolution of \eqref{sigmak}.
\end{definition}

It is well known that a function
$u\in{C}^{2}(\mathbb{R}^{n}\setminus\overline{D})$ is a viscosity
solution (respectively, subsolution, supersolution) of
\eqref{sigmak} if and only if it is a $k$-convex classical solution
(respectively, subsolution, supersolution).

\begin{definition}
Let $\varphi\in{C}^{0}(\partial{D})$. A function
$u\in\mathrm{USC}(\mathbb{R}^{n}\setminus{D})$
($u\in\mathrm{LSC}(\mathbb{R}^{n}\setminus{D})$) is said to be a
viscosity subsolution (supersolution) of the Dirichlet problem
\begin{equation}\label{dirichlet}
\begin{cases}
\sigma_{k}(\lambda(D^{2}u))=1,&\mbox{in}~\mathbb{R}^{n}\setminus{\overline{D}},\\
u=\varphi,&\mbox{on}~\partial{D},
\end{cases}
\end{equation}
if $u$ is a viscosity subsolution (supersolution) of \eqref{sigmak}
in $\mathbb{R}^{n}\setminus{\overline{D}}$ and $u\leq$ ($\geq$)
$\varphi$ on $\partial{D}$. A function
$u\in{C}^{0}(\mathbb{R}^{n}\setminus{D})$ is said to be a
viscosity solution of \eqref{dirichlet} if it is both a subsolution
and a supersolution.
\end{definition}

Let
$$\mathcal{A}_{k}=\left\{A ~\big|~ A\ \mbox{is a real}\ n\times{n}\ \mbox{symmetric positive definite matrix,}
 \mbox{ with}\ \sigma_{k}(\lambda(A))=1\right\}.$$
Our main result is
\begin{theorem}\label{thm1}
Let $D$ be a smooth, bounded, strictly convex open subset of
$\mathbb{R}^{n}$, $n\geq3$, and let
$\varphi\in{C}^{2}(\partial{D})$. Then for any given
$b\in\mathbb{R}^{n}$ and any given $A\in\mathcal{A}_{k}$ with
$2\leq{k}\leq{n}$, there exists some constant $c_{*}$, depending
only on $n,b,A, D$ and $\|\varphi\|_{C^{2}(\partial{D})}$, such
that for every $c>c_{*}$ there exists a unique viscosity solution
$u\in{C}^{0}(\mathbb{R}^{n}\setminus{D})$ of \eqref{dirichlet} and
\begin{equation}\label{asymptotic}
\limsup_{|x|\rightarrow\infty}\left(~|x|^{\theta(n-2)}\left|~u(x)-\left(\frac{1}{2}x^{T}Ax+b\cdot{x}+c\right)\right|~\right)<\infty,
\end{equation}
where $\theta\in\left[\frac{k-2}{n-2},1\right]$ is a constant
depending only on $n,k$, and $A$.
\end{theorem}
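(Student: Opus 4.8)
The natural strategy is the Perron method: construct a subsolution and a supersolution of the Dirichlet problem \eqref{dirichlet} that agree (to the required order) with the prescribed asymptotic profile $Q(x)=\tfrac12 x^TAx+b\cdot x+c$ at infinity, then take the supremum of all admissible subsolutions lying below the supersolution. First I would reduce to the case $b=0$ and $A$ diagonal by an affine change of variables $x\mapsto A^{1/2}x$ composed with a translation, which preserves $k$-convexity and the equation (since $\sigma_k(\lambda(A))=1$), changing $D$ and $\varphi$ but not the structure of the problem; for $A=c^*I$ one may even reduce to a radial model. The key explicit object is a radial generalized solution: one seeks $w(x)=w(r)$, $r=|x|$, solving $\sigma_k(\lambda(D^2w))=1$ with $\lambda(D^2w)=(w'',\,w'/r,\dots,w'/r)$ (one radial and $n-1$ tangential eigenvalues), which gives an ODE whose solutions behave like $\tfrac{1}{2}(c^*)^{\cdots}r^2 + c + O(r^{-\theta(n-2)})$ for a suitable exponent $\theta$; matching the decay rate $r^{-\theta(n-2)}$ forces $\theta\in[\frac{k-2}{n-2},1]$ through the indicial analysis of the linearized operator at infinity (the $\theta=1$ endpoint recovers the Monge–Ampère rate $r^{2-n}$ of \cite{cl}, the lower endpoint $\frac{k-2}{n-2}$ is the slowest admissible decay).

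For the \emph{subsolution}, I would take $\underline{u}(x)=Q(x)-N$ for a large constant $N=N(n,b,A,D,\|\varphi\|_{C^2})$ away from $D$, and near $\partial D$ glue it (via a max with a suitable supporting paraboloid, using strict convexity of $D$) to a function that lies below $\varphi$ on $\partial D$; since $Q$ itself satisfies $\sigma_k(\lambda(D^2Q))=\sigma_k(\lambda(A))=1$ and the max of admissible subsolutions is an admissible subsolution, $\underline{u}$ is a subsolution of \eqref{dirichlet} once $c$ (hence $N$) is large enough. For the \emph{supersolution}, I would use a perturbation of the radial model solution $w$ centered appropriately: $\bar u(x)= \tfrac12 x^TAx + b\cdot x + \mu\, w_0(|A^{1/2}x - p|) + c$ with $w_0$ the radial sub-quadratic correction solving the model ODE, choosing the center $p$ and the constant $\mu$ so that $\bar u\ge\varphi$ on $\partial D$ and so that the radial correction makes $\lambda(D^2\bar u)$ leave the elliptic cone appropriately to serve as a supersolution; one checks $\bar u$ dominates $\underline u$ for $c>c_*$. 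Then $u:=\sup\{v : v \text{ admissible subsolution of }\eqref{dirichlet},\ v\le\bar u\}$ is, by the standard Perron/Ishii argument for viscosity solutions of Hessian equations (boundary barriers from strict convexity of $D$ and \cite{cns,u}, interior comparison from \cite{cc,t0}), a viscosity solution attaining $\varphi$ on $\partial D$, and it is squeezed between $\underline u$ and $\bar u$, which forces the asymptotic \eqref{asymptotic}.

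Uniqueness should follow from the comparison principle: if $u_1,u_2$ are two solutions with the same asymptotic profile $Q$, then $u_1-u_2\to0$ at infinity, so on any large ball $B_R\setminus D$ the maximum of $u_1-u_2$ on the boundary tends to $0$ as $R\to\infty$ while both agree with $\varphi$ on $\partial D$; applying the viscosity comparison principle for $\sigma_k=1$ on $B_R\setminus D$ (valid for admissible functions) and letting $R\to\infty$ gives $u_1\equiv u_2$. The last delicate point worth isolating is showing the Perron solution actually has the sharp decay rate $r^{-\theta(n-2)}$ rather than merely being trapped between two functions with that rate — but since $\underline u - Q$ is bounded and $\bar u - Q = O(r^{-\theta(n-2)})$, the squeeze already yields \eqref{asymptotic}; the genuinely substantive work is constructing the supersolution $\bar u$ with the correct matching constant and verifying its admissibility, i.e. analyzing the ODE for $w_0$ and the range of $\theta$.

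The main obstacle I anticipate is the construction and verification of the radial model solution and the resulting supersolution: unlike Monge–Ampère, the $\sigma_k$ ODE for radial functions does not integrate in closed form for $2\le k\le n-1$, so one must argue existence, monotonicity, and precise asymptotics of $w_0$ via ODE phase-plane/energy analysis, and then show that the perturbation $\mu w_0$ of the quadratic $Q$ keeps $\lambda(D^2\bar u)$ in $\overline\Gamma_k$ (for the subsolution side) or makes $\sigma_k\le 1$ (for the supersolution side) — this is where the constraint $\theta\in[\frac{k-2}{n-2},1]$ emerges and where the bulk of the technical estimates will live.
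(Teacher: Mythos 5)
The proposal misses the central difficulty of the paper and contains a reduction step that is simply false for $2\le k\le n-1$, plus a squeeze argument that would not establish the required decay rate.

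\textbf{The affine reduction fails for $k<n$.} You propose to reduce to $b=0$, $A$ diagonal, and possibly to a radial model ``by an affine change of variables $x\mapsto A^{1/2}x$ \ldots which preserves $k$-convexity and the equation.'' This is precisely what does \emph{not} work: the paper explains that while for $k=n$ one can use a linear transformation $y=Qx$ with $\det Q=1$ to normalize $A$ to $I$ (because $\det(QD^2_yuQ)=\det(D^2_yu)$), for $2\le k\le n-1$ the operator $\sigma_k$ is not covariant under congruence, i.e.\ $\sigma_k(\lambda(QD^2_yuQ))\neq\sigma_k(\lambda(Q))\,\sigma_k(\lambda(D^2_yu))\,\sigma_k(\lambda(Q))$. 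Only an orthogonal change of variables is allowed (to diagonalize $A$); one cannot rescale to $A=c^*I$ and hence cannot reduce to a radial model.

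\textbf{The radial/generalized-symmetric model solutions do not exist.} Your plan hinges on a ``radial generalized solution'' of $\sigma_k(\lambda(D^2w))=1$ to build the supersolution via ODE analysis. Proposition \ref{prop1} of the paper shows there is \emph{no} non-affine generalized symmetric solution with respect to a diagonal $A\in\mathcal{A}_k$ unless $k=n$ or $A=c^*I$. So the model you want to analyze by ``phase-plane/energy analysis'' does not exist for general $A$ when $2\le k\le n-1$. The paper's key new ingredient is to replace exact solutions by a family of generalized symmetric smooth $k$-convex \emph{subsolutions} $\omega_\alpha(\tfrac12 x^TAx)$ solving the modified ODE $(\omega')^k+2h_k(a)\,s\,\omega''(\omega')^{k-1}=1$ with $h_k(a)=\max_i a_i\sigma_{k-1;i}(a)$; the exponent $\theta=\tfrac{1}{n-2}\bigl(\tfrac{k}{h_k(a)}-2\bigr)$ and the interval $[\tfrac{k-2}{n-2},1]$ arise from the inequalities $\tfrac kn\le h_k(a)<1$, not from indicial analysis of a linearized operator at infinity. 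In the paper the supersolution is trivial: it is just the quadratic $\overline{u}(x)=\tfrac12x^TAx+c$, which is an exact solution; no perturbation by a radial correction is needed or used.

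\textbf{The squeeze argument as stated does not give \eqref{asymptotic}.} You take $\underline{u}=Q-N$ with $N$ a large constant, so $\underline{u}-Q$ is merely bounded below, and you assert that since $\bar u-Q=O(r^{-\theta(n-2)})$ ``the squeeze already yields \eqref{asymptotic}.'' It does not: $Q-N\le u\le Q+O(r^{-\theta(n-2)})$ only gives $|u-Q|\le N$, not decay. To get \eqref{asymptotic} one must have \emph{both} barriers approach $Q$ with the required rate. In the paper this is achieved because the lower barrier far from $D$ is exactly $\omega_{\alpha(c)}(x)=\tfrac12x^TAx+c+O(|x|^{\theta(2-n)})$ (after tuning $\alpha(c)$ so that $\mu(\alpha(c))=c$), and the upper barrier is $\overline{u}=Q$ exactly; the lower barrier's decay is therefore the crux, and it comes precisely from the constructed subsolutions $\omega_\alpha$, not from a perturbed supersolution.

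The boundary-barrier construction using supporting paraboloids (your $w_\xi$, the paper's Lemma \ref{lem_omega}) and the Perron/Ishii framework for the existence and for squeezing the boundary values are in the spirit of the paper, as is the uniqueness by comparison. But the substantive new work—the generalized symmetric subsolutions with the $h_k(a)$-modified ODE, and the observation that general $A$ admits no generalized symmetric exact solutions—is absent from the proposal and cannot be substituted by the radial reduction you envision.
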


\begin{remark}
For the two cases $(\mathrm{i})$ $k=n$, the Monge-Amp\`{e}re
equations with any $A\in\mathcal{A}_{n}$; and $(\mathrm{ii})$
$2\leq{k}\leq{n}-1$, \eqref{asymptotic} with
$A=c^{*}I\in\mathcal{A}_{k}$, Theorem \ref{thm1} has been proved by
Caffarelli-Li \cite{cl} and Dai-Bao \cite{db}, respectively, where
$\theta=1$. Moreover, for the symmetric case $A=c^{*}I$, Wang-Bao
\cite{wb} have proved that for $2\leq{k}\leq{n}$, there exists a
$\bar{c}(k,n)$ such that there is no classical radial solution of
\eqref{dirichlet} and \eqref{asymptotic} if $c<\bar{c}(k,n)$.
\end{remark}

Recall that any real symmetric matrix $A$ has an eigen-decomposition
$A=O^{T}\Lambda{O}$ where $O$ is an orthogonal matrix, and $\Lambda$
is a diagonal matrix. That is, $A$ may be regarded as a real
diagonal matrix $\Lambda$ that has been re-expressed in some new
coordinate system, and the eigenvalues
$\lambda(A)=\lambda(\Lambda)$. Let
$$y=Ox,\quad\mbox{and}\quad\,v(y)=u(O^{-1}y),$$
then \eqref{dirichlet} and \eqref{asymptotic} become
$$
\begin{cases}
\sigma_{k}(\lambda(D^{2}_{y}v))=1,&\mbox{in}~\mathbb{R}^{n}\setminus{\overline{\widetilde{D}}},\\
v=\varphi(O^{-1}y),&\mbox{on}~\partial{\widetilde{D}}
\end{cases}
$$
and
$$\limsup_{|y|\rightarrow\infty}~\left(|O^{-1}y|^{\theta(n-2)}\left|~v(y)-\left(\frac{1}{2}y^{T}\Lambda{y}+bO^{-1}\cdot{y}+c\right)\right|~\right)<\infty,$$
where $\widetilde{D}$ is transformed from $D$ under $y=Ox$. So,
without loss of generality, we always assume that $A$ is diagonal in
this paper.

If $A$ is diagonal and $A\in\mathcal{A}_{n}$, then
$\sigma_{n}(\lambda(A))=1$, and we can find a diagonal matrix $Q$
with $\det{Q}=1$ such that $QAQ=I\in\mathcal{A}_{n}$. Clearly,
$\lambda(I)$ is not necessarily the same as $\lambda(A)$, but under
transformation $y=Qx$, we still have
$$\det\left(D^{2}_{x}u\right)=\det\left(QD^{2}_{y}uQ\right)= \det\left(D^{2}_{y}u\right).$$
Therefore, when the Monge-Amp\`{e}re equation is considered,
Caffarelli and Li \cite{cl} can assume without loss of generality
that $A=I$. However, when $2\leq{k}\leq{n}-1$, if $A$ is diagonal
and $A\in\mathcal{A}_{k}$, $\sigma_{k}(\lambda(A))=1$, although we
can also find a diagonal matrix $Q$ such that
$QAQ=c^{*}I\in\mathcal{A}_{k}$, it is clear that
$\lambda(A)\neq\lambda(c^{*}I)$ unless $A=c^{*}I$, and for Hessian
operator
$$\sigma_{k}\left(\lambda(QD^{2}_{y}uQ)\right)\neq\sigma_{k}(\lambda(Q))\sigma_{k}\left(\lambda(D^{2}_{y}u)\right)\sigma_{k}(\lambda(Q)).$$
So, in order to prove Theorem \ref{thm1}, we are only allowed to
assume that $A$ is diagonal, but we can not further assume that
$A=c^{*}I$.

\begin{definition}
For a diagonal matrix $A=\mathrm{diag}(a_{1},a_{2},\cdots,a_{n})$,
we call $u$ a generalized symmetric function with respect to $A$, if
$u$ is a function of
$$ s=\frac{1}{2}x^{T}Ax=\frac{1}{2}\sum_{i=1}^{n}a_{i}x_{i}^{2}. $$

If $u$ is a generalized symmetric function with respect to $A$ and
$u$ is a solution (respectively, subsolution, supersolution) of the
Hessian equation \eqref{sigmak}, then we call $u$ a generalized
symmetric solution (respectively, subsolution, supersolution) of
\eqref{sigmak}.
\end{definition}

In this paper we often abuse notations slightly by writing
$u(x)=u(\frac{1}{2}x^{T}Ax)$ for a generalized symmetric function
with respect to $A$. Clearly, for diagonal matrix
$A=\mathrm{diag}(a_{1},a_{2},\cdots,a_{n})\in\mathcal{A}_{k}$, and
real constants $\mu_{1}$, $\mu_{2}$, with $\mu_{1}^{k}=1$,
\begin{equation}\label{omegas_mu}
\omega(s)=\mu_{1}s+\mu_{2},\quad\,s=\frac{1}{2}\sum_{i=1}^{n}a_{i}x_{i}^{2}
\end{equation}
 satisfies the Hessian equation
\eqref{sigmak} and $\omega''(s)\equiv0$.

First, we will derive a formula of $\sigma_k(\lambda(M))$ for
matrices $M$ of the form
\begin{equation}\label{m1}
M=\Big(p_{i}\delta_{ij}-\beta\,q_{i}q_{j}\Big)_{n\times{n}},
\end{equation}
where $p=(p_{1},p_{2},\cdots,p_{n})$, $q=(q_{1},q_{2},\cdots,q_{n})$
and $\beta\in\mathbb{R}$.

\begin{prop}\label{propM}
If $M$ is a $n\times{n}$ matrix  of the form \eqref{m1} for
$p=(p_{1},p_{2},\cdots,p_{n})$, $q=(q_{1},q_{2},\cdots,q_{n})$ and
$\beta\in\mathbb{R}$, then we have
\begin{equation}\label{sigmakm}
\sigma_{k}(\lambda(M))=\sigma_{k}(p)-\beta\sum_{i=1}^{n}q_{i}^{2}\sigma_{k-1;i}(p),
\end{equation}
where $\sigma_{k-1;i}(p)=\sigma_{k-1}(p)|_{p_{i}=0}$.
\end{prop}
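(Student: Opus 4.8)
The plan is to use the classical fact that $\sigma_k(\lambda(M))$ equals the sum of all $k\times k$ principal minors of $M$. For a $k$-element index set $S\subseteq\{1,\dots,n\}$ the corresponding principal submatrix $M_S=(p_i\delta_{ij}-\beta q_iq_j)_{i,j\in S}$ is the diagonal matrix $\mathrm{diag}(p_i:i\in S)$ minus the rank-one matrix $\beta\,q_Sq_S^{T}$, where $q_S=(q_i)_{i\in S}$. So the computation reduces to a determinant formula for a diagonal matrix perturbed by a rank-one term, summed over all $k$-subsets.

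The key lemma I would first record is that, for arbitrary real numbers $d_1,\dots,d_m,q_1,\dots,q_m$ and $\beta\in\mathbb{R}$,
$$\det\!\Big(\mathrm{diag}(d_1,\dots,d_m)-\beta\,q q^{T}\Big)=\prod_{i=1}^{m}d_i-\beta\sum_{i=1}^{m}q_i^{2}\prod_{j\neq i}d_j.$$
The cleanest self-contained argument is multilinearity of the determinant in the rows: the $i$-th row of the matrix is $d_i e_i^{T}-\beta q_i q^{T}$, and expanding the determinant row by row, every term that selects the $-\beta q_iq^{T}$ part from two or more distinct rows vanishes because those rows become proportional to $q^{T}$; hence only the all-diagonal term, contributing $\prod_i d_i$, and the $m$ terms that select $-\beta q_iq^{T}$ from exactly one row $i$ survive, each of the latter contributing $-\beta q_i^{2}\prod_{j\neq i}d_j$ (a diagonal determinant on the remaining $m-1$ rows paired with the $i$-th entry of $q^T$). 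Equivalently one may prove the identity for invertible diagonal matrices via $\det(D-\beta qq^{T})=\det(D)(1-\beta\,q^{T}D^{-1}q)$ and then extend by polynomial continuity in the $d_i$; the multilinearity version has the advantage of being valid without any nondegeneracy assumption.

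Applying this with $d_i=p_i$ to each principal submatrix gives
$$\det M_S=\prod_{i\in S}p_i-\beta\sum_{i\in S}q_i^{2}\prod_{j\in S\setminus\{i\}}p_j .$$
Summing over all $k$-subsets $S$, the first sum is $\sigma_k(p)$ by definition. For the second, I would interchange the order of summation: fix $i\in\{1,\dots,n\}$ and sum over all $k$-subsets $S$ containing $i$; what remains is $q_i^{2}$ times $\sum_T\prod_{j\in T}p_j$, the sum taken over all $(k-1)$-subsets $T$ of $\{1,\dots,n\}\setminus\{i\}$, which is exactly $\sigma_{k-1}$ evaluated at the tuple $p$ with the entry $p_i$ deleted, i.e. $\sigma_{k-1}(p)|_{p_i=0}=\sigma_{k-1;i}(p)$. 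Combining the two pieces yields \eqref{sigmakm}.

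I do not anticipate a genuine obstacle: every step is elementary once the rank-one determinant identity is in hand. The only point requiring a little care is to prove that identity by a method (the row multilinearity expansion, or the adjugate formula $\det(D-\beta qq^{T})=\det D-\beta\,q^{T}\,\mathrm{adj}(D)\,q$) that does not divide by $\prod_i p_i$, since some $p_i$ may vanish; the bookkeeping in the double sum $\sum_{S}\sum_{i\in S}$ is then purely combinatorial. (As an alternative derivation one can note that, since the perturbation is rank one, $\sigma_k(\lambda(\mathrm{diag}(p)-\beta qq^{T}))$ is affine in $\beta$, with constant term $\sigma_k(p)$ and $\beta$-derivative $-\sum_{i,j}q_iq_j\,\partial\sigma_k/\partial M_{ij}$ evaluated at $\mathrm{diag}(p)$, which is diagonal with entries $\sigma_{k-1;i}(p)$; but the direct minor computation above is more transparent.)
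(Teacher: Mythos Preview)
Your proof is correct and takes a genuinely different route from the paper's. The paper first reduces to $\beta=1$, then computes the full characteristic polynomial $\det(\lambda I-M)$ by explicit row operations and an induction on $n$, arriving at
\[
\det(\lambda I-M)=\prod_{i=1}^{n}(\lambda-p_i)+\sum_{j=1}^{n}q_j^{2}\prod_{i\neq j}(\lambda-p_i),
\]
and finally reads off $\sigma_k(\lambda(M))$ by matching coefficients of $\lambda^{n-k}$ via Vieta's formulas. By contrast, you bypass the characteristic polynomial entirely: you invoke the principal-minor description of $\sigma_k$, evaluate each $k\times k$ principal minor with the rank-one determinant identity $\det(D-\beta qq^{T})=\prod d_i-\beta\sum_i q_i^{2}\prod_{j\neq i}d_j$, and then reorganize the resulting double sum. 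Your argument is shorter and more direct for the stated conclusion, and the multilinearity proof of the rank-one identity cleanly avoids any nondegeneracy hypothesis on the $p_i$. The paper's approach, on the other hand, yields the explicit factorized form of $\det(\lambda I-M)$ as a byproduct, which is a slightly stronger statement though not needed later in the paper.
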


For any $A=\mathrm{diag}(a_{1},a_{2},\cdots,a_{n})$, suppose
$\omega\in{C}^{2}(\mathbb{R}^{n})$ is a generalized symmetric
function with respect to $A$, that is,
$$\omega(x)=\omega\left(\frac{1}{2}\sum_{i=1}^{n}a_{i}x_{i}^{2}\right),$$
then
$$D_{i}\omega(x)=\omega'(s)a_{i}x_{i},$$
$$ D_{ij}\omega(x)=\omega'(s)a_{i}\delta_{ij}+\omega''(s)(a_{i}x_{i})(a_{j}x_{j}).
$$
We have the following lemma.

\begin{lemma}\label{lem_sigmak}
For any $A=\mathrm{diag}(a_{1},a_{2},\cdots,a_{n})$, if
$\omega\in{C}^{2}(\mathbb{R}^{n})$ is a generalized symmetric
function with respect to $A$, then, with
$a=(a_{1},a_{2},\cdots,a_{n})$,
\begin{align}\label{sigmakomega}
\sigma_{k}(\lambda(D^{2}\omega))
=\sigma_{k}(a)(\omega')^{k}+\omega''(\omega')^{k-1}\sum_{i=1}^{n}\sigma_{k-1;i}(a)(a_{i}x_{i})^{2}.
\end{align}
\end{lemma}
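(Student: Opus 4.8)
The plan is to combine the second-derivative formula for a generalized symmetric function with the algebraic identity from Proposition~\ref{propM}. From the computation preceding the lemma statement we have
$$D_{ij}\omega(x)=\omega'(s)\,a_i\delta_{ij}+\omega''(s)\,(a_ix_i)(a_jx_j),$$
so the Hessian matrix $D^2\omega$ is exactly of the form \eqref{m1} with
$$p_i=\omega'(s)\,a_i,\qquad q_i=a_ix_i,\qquad \beta=-\omega''(s).$$
Applying \eqref{sigmakm} directly gives
$$\sigma_k(\lambda(D^2\omega))=\sigma_k(p)+\omega''(s)\sum_{i=1}^n(a_ix_i)^2\,\sigma_{k-1;i}(p).$$

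Next I would use the homogeneity of the elementary symmetric functions to pull out the scalar $\omega'(s)$. Since $p=\omega'(s)\,a$, we have $\sigma_k(p)=(\omega')^k\sigma_k(a)$ because $\sigma_k$ is homogeneous of degree $k$; similarly $\sigma_{k-1;i}(p)=\sigma_{k-1}(p)|_{p_i=0}$ is homogeneous of degree $k-1$ in the remaining variables, so $\sigma_{k-1;i}(p)=(\omega')^{k-1}\sigma_{k-1;i}(a)$. Substituting these two identities yields precisely
$$\sigma_k(\lambda(D^2\omega))=(\omega')^k\sigma_k(a)+\omega''(\omega')^{k-1}\sum_{i=1}^n\sigma_{k-1;i}(a)(a_ix_i)^2,$$
which is \eqref{sigmakomega}. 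One should also note the matching of signs: the $-\beta$ in \eqref{sigmakm} becomes $+\omega''$ because $\beta=-\omega''$, so the final formula has a plus sign as stated.

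This proof is essentially a one-line substitution once Proposition~\ref{propM} is in hand, so there is no real obstacle at the level of the lemma itself; the only things to be careful about are bookkeeping ones — correctly identifying $p$, $q$, $\beta$ from the Hessian formula (in particular the sign of $\beta$), and invoking the right homogeneity degree ($k$ for $\sigma_k(p)$ versus $k-1$ for each $\sigma_{k-1;i}(p)$, since setting $p_i=0$ drops the degree by one). Strictly speaking the genuine content sits in Proposition~\ref{propM}, whose proof would proceed by writing $M=\mathrm{diag}(p)-\beta\,q q^{T}$ as a rank-one perturbation of a diagonal matrix and expanding $\det(tI+M)=\det\bigl(tI+\mathrm{diag}(p)\bigr)\bigl(1-\beta\,q^{T}(tI+\mathrm{diag}(p))^{-1}q\bigr)$ via the matrix determinant lemma, then reading off the coefficient of $t^{n-k}$ to extract $\sigma_k(\lambda(M))$; but since that proposition is stated earlier in the excerpt, I may use it freely and the lemma follows immediately.
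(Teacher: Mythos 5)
Your proof is correct and takes essentially the same route as the paper: identify $p_i=\omega'(s)a_i$, $q_i=a_ix_i$, $\beta=-\omega''(s)$ in the Hessian, substitute into \eqref{sigmakm}, and conclude. The paper leaves the homogeneity step ($\sigma_k(p)=(\omega')^k\sigma_k(a)$, $\sigma_{k-1;i}(p)=(\omega')^{k-1}\sigma_{k-1;i}(a)$) implicit, while you spell it out; everything else is identical.
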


If $A=c^{*}I$, $2\leq{k}\leq{n}$, then there exist a family of
radially symmetric functions
\begin{equation*}
\overline{\omega}_{k}(s)=\int_{1}^{s}\Big(1+\alpha\,t^{-\frac{n}{2}}\Big)^{\frac{1}{k}}dt,\quad\alpha>0,~s>0,
\end{equation*}
satisfying
$$\sigma_{k}(\lambda(D^{2}\omega))=1, \ \ \ \mbox{in}~\mathbb{R}^{n}\setminus\{0\}.$$
Such radially symmetric solutions play an important role to the
solvability of the exterior Dirichlet problems studied by
Caffarelli-Li \cite{cl} and by Dai-Bao \cite{db}. However, for any
given $A\in\mathcal{A}_{k}$ with $2\leq{k}\leq{n}-1$, it is not
enough to prove Theorem \ref{thm1} only using these radially
symmetric functions. Due to the invariance of \eqref{sigmak} for
$k=n$, the Monge-Amp\`{e}re equation, under affine transformations,
$\overline{\omega}_{n}(\frac{1}{2}x^{T}Ax)$ is a solution of
\eqref{sigmak} in $\mathbb{R}^{n}\setminus\{0\}$ for
$A\in\mathcal{A}_{n}$. So the Monge-Amp\`{e}re equation has
generalized symmetric solutions with respect to $A$ for every
$A\in\mathcal{A}_{n}$. A natural question is that whether
\eqref{sigmak} with $2\leq{k}\leq{n}-1$ has generalized symmetric
solutions with respect to $A$ for every $A\in\mathcal{A}_{k}$
besides those of the form \eqref{omegas_mu}.

For this, we have

\begin{prop}\label{prop1}
For $A=\mathrm{diag}(a_{1},a_{2},\cdots,a_{n})\in\mathcal{A}_{k}$,
$1\leq{k}\leq{n}$, and $0<\alpha<\beta<\infty$, if there exists an
$\omega\in{C}^{2}(\alpha,\beta)$ with $\omega''\nequiv0$ in
$(\alpha,\beta)$, such that
$\omega(x)=\omega(\frac{1}{2}\sum_{i=1}^{n}a_{i}x_{i}^{2})$ is
a generalized symmetric solution of the Hessian equation
\eqref{sigmak} in
$\{x\in\mathbb{R}^{n}~|~\alpha<\frac{1}{2}\sum_{i=1}^{n}a_{i}x_{i}^{2}<\beta\}$,
then
$$k=n\quad\mbox{or}\quad\,a_{1}=a_{2}=\cdots=a_{n}=c^{*},$$
where $c^{*}=(C_{n}^{k})^{-1/k}$, $C_{n}^{k}=\frac{n!}{(n-k)!k!}$,
and vice versa.
\end{prop}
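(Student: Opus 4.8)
The plan is to plug the generalized symmetric ansatz into the formula of Lemma~\ref{lem_sigmak} and extract an ODE for $\omega$, then show that the resulting identity forces either $k=n$ or $a_1=\cdots=a_n=c^*$. Writing $s=\frac12\sum_i a_ix_i^2$ and $a=(a_1,\dots,a_n)$, Lemma~\ref{lem_sigmak} gives
\begin{equation*}
\sigma_k(a)\,(\omega')^k+\omega''(\omega')^{k-1}\sum_{i=1}^n\sigma_{k-1;i}(a)(a_ix_i)^2=1,\qquad \alpha<s<\beta.
\end{equation*}
Since $\sigma_k(\lambda(A))=\sigma_k(a)=1$, this reads $(\omega')^k-1=-\omega''(\omega')^{k-1}\sum_i\sigma_{k-1;i}(a)(a_ix_i)^2$. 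The left-hand side depends on $x$ only through $s$, so the function $P(x):=\sum_{i=1}^n\sigma_{k-1;i}(a)(a_ix_i)^2$ must be constant on each level set $\{s=\text{const}\}$ wherever $\omega''\ne0$; since $\omega''\not\equiv0$ on $(\alpha,\beta)$ and $\omega\in C^2$, this must hold on an open subset of $x$-space, hence (by the quadratic nature of $P$ and $s$) $P$ must be a multiple of $s$ plus a constant: there are constants $\mu,\nu$ with $\sum_i\sigma_{k-1;i}(a)(a_ix_i)^2=\mu\sum_i a_ix_i^2+\nu$ identically. Comparing coefficients of $x_i^2$ and the constant term forces $\nu=0$ and
\begin{equation*}
a_i\,\sigma_{k-1;i}(a)=\tfrac{\mu}{2}\qquad\text{for every }i=1,\dots,n.
\end{equation*}

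The heart of the matter is then a purely algebraic claim: if $A=\mathrm{diag}(a)\in\mathcal A_k$ with all $a_i>0$ and $a_i\sigma_{k-1;i}(a)$ independent of $i$, then $k=n$ or $a_1=\cdots=a_n$. I would prove this by the standard Newton/divided-difference identity $\sigma_{k-1;i}(a)=\sigma_{k-1}(a)-a_i\sigma_{k-2;i}(a)$, or more cleanly by noting $a_i\sigma_{k-1;i}(a)=\sigma_k(a)-\sigma_k(a)|_{a_i=0}=\sigma_k(a)-\sigma_{k;i}(a)$, so the condition becomes $\sigma_{k;i}(a)=\sigma_{k;j}(a)$ for all $i,j$. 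For $k<n$, $\sigma_{k;i}(a)$ is a genuine nonconstant function of the remaining variables unless they are forced equal; concretely, for $i\ne j$, $\sigma_{k;i}(a)-\sigma_{k;j}(a)=(a_j-a_i)\,\sigma_{k-1}(a)|_{a_i=a_j=0}$, and since $a\in\Gamma_k$ (indeed $\Gamma^+$ here, as $A$ is positive definite) the factor $\sigma_{k-1}(a)|_{a_i=a_j=0}=\sigma_{k-1}(\hat a)$ over the $n-2\ge k-1$ remaining positive entries is strictly positive when $k\le n-1$. Hence $a_i=a_j$ for all $i,j$, i.e.\ $a_1=\cdots=a_n$, and then $\sigma_k(a)=C_n^k a_1^k=1$ gives $a_1=c^*$. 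When $k=n$ the factor $\sigma_{k-1}(\hat a)$ is an empty/degenerate product and no constraint arises, which is exactly the Monge--Amp\`ere exception.

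For the converse (``vice versa''), I would exhibit, for $k=n$ with arbitrary $A\in\mathcal A_n$ (using affine invariance of $\det$) and for $A=c^*I$ with $2\le k\le n$, the explicit radial profile $\overline\omega_k(s)=\int_1^s(1+\alpha t^{-n/2})^{1/k}\,dt$ already recorded in the excerpt: one checks $\omega'=(1+\alpha s^{-n/2})^{1/k}$, $\omega''\not\equiv0$, and substitutes into \eqref{sigmakomega} with $\sigma_{k-1;i}(a)(a_ix_i)^2$ summing (in the symmetric case) to $2\,\tfrac{n-k+1}{n}\,\sigma_{k-1}(a)\,s$, producing a first-order linear ODE in $(\omega')^k$ that $\overline\omega_k$ solves identically; in the Monge--Amp\`ere case one instead invokes that $\overline\omega_n(\frac12 x^TAx)$ solves $\det D^2u=1$ by the change of variables diagonalizing and rescaling $A$. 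The main obstacle I anticipate is not the ODE manipulation but pinning down rigorously the step ``$P(x)$ constant on level sets of $s$ on an open set $\Rightarrow$ $P=\mu s$'': one must use that $\omega''\not\equiv0$ together with continuity to locate an open set where $\omega''\ne0$, then argue that two quadratic forms (both sums of $x_i^2$ with positive coefficients, since $a_i>0$ and $\sigma_{k-1;i}(a)>0$ on $\Gamma^+$) sharing all level sets on an open set must be proportional — a short but necessary lemma comparing gradients $\nabla P$ and $\nabla s$.
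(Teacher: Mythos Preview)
Your proposal is correct and follows essentially the same line as the paper: apply Lemma~\ref{lem_sigmak}, deduce that $a_i\sigma_{k-1;i}(a)$ is independent of $i$, and then factor the difference as $(a_{i_1}-a_{i_2})\sigma_{k-1;i_1i_2}(a)$ to conclude $a_{i_1}=a_{i_2}$ for $k\le n-1$. The one place where the paper is more economical is exactly at your self-identified obstacle: rather than arguing abstractly that two positive quadratic forms sharing level sets on an open set must be proportional, the paper simply evaluates the identity from Lemma~\ref{lem_sigmak} at the coordinate-axis points $x=(0,\dots,0,\sqrt{2s/a_i},0,\dots,0)$, which sit on the level set $\{\tfrac12 x^TAx=s\}$, and reads off directly
\[
1=(\omega'(s))^{k}+2s\,\omega''(s)(\omega'(s))^{k-1}\,a_i\sigma_{k-1;i}(a),\qquad i=1,\dots,n.
\]
Picking $\bar s$ with $\omega''(\bar s)\neq 0$ (and noting $\omega'(s)\neq 0$ for $k\ge 2$, since otherwise the equation would read $0=1$) gives the independence of $a_i\sigma_{k-1;i}(a)$ in $i$ with no auxiliary lemma on quadratic forms required. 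Your treatment of the converse via the explicit profiles $\overline\omega_k$ matches what the paper records just before the proof.
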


This means that for
$A=\mathrm{diag}(a_{1},a_{2},\cdots,a_{n})\in\mathcal{A}_{k}$,
$2\leq{k}\leq{n}-1$, $\omega(\frac{1}{2}x^{T}Ax)$ is in general not
a solution of \eqref{sigmak}.

To prove Theorem \ref{thm1} for $2\leq{k}\leq{n}-1$, it suffices to
obtain enough subsolutions with appropriate properties. We construct
such subsolutions which are generalized symmetric functions with
respect to $A$. This is the main new ingredient in our proof of the
theorem.

This paper is set out as follows. In the next section we construct a
family of generalized symmetric smooth $k$-convex subsolutions of
\eqref{sigmak} in $\mathbb{R}^{n}\setminus\{0\}$. In Section
\ref{sec1}, we prove Theorem \ref{thm1} using Perron's method.

\section{Generalized symmetric solutions and subsolutions}\label{sec0}

In this section, we first derive formula \eqref{sigmakm} and
\eqref{sigmakomega}, then prove Proposition \ref{prop1}, and finally
construct a family of generalized symmetric smooth $k$-convex
subsolutions of \eqref{sigmak}.

For $A=\mathrm{diag}(a_{1},a_{2}\cdots,a_{n})$, we denote
$\lambda(A)=(a_{1},a_{2}\cdots,a_{n}):=a$. If $A\in\mathcal{A}_{k}$,
then we have $a_{i}>0$ $(i=1,2,\cdots,n)$ and $\sigma_{k}(a)=1$.
Here we introduce some notations. For any fixed $t$-tuple
$\{i_{1},\cdots,i_{t}\}$, $1\leq{t}\leq{n}-k$, we define
$$\sigma_{k;i_{1}\cdots{i}_{t}}(a)=\sigma_{k}(a)|_{a_{i_{1}}=\cdots=a_{i_{t}}=0},$$
that is, $\sigma_{k;i_{1}\cdots{i}_{t}}$ is the $k$-th order
elementary symmetric function of the $n-t$ variables
$\left\{a_{i}~\big|~i\in\{1,2,\cdots,n\}\setminus\{i_{1},i_{2},\cdots,i_{t}\}\right\}$.
The following properties of the functions $\sigma_{k}$ will be used
in this paper:
\begin{equation}\label{property}
\sigma_{k}(a)=\sigma_{k;i}(a)+a_{i}\sigma_{k-1;i}(a),\quad~i=1,2,\cdots,n,
\end{equation}
and
\begin{equation}\label{property1}
\sum_{i=1}^{n}a_{i}\sigma_{k-1;i}(a)=k\sigma_{k}(a).
\end{equation}

Now we prove Proposition \ref{propM} to derive a formula of
$\sigma_k(\lambda(M))$ for matrices $M$ of the form \eqref{m1}.

\begin{proof}[Proof of Proposition \ref{propM}]
If $\beta=0$, \eqref{sigmakm} is obvious. If $\beta\neq0$, we work
with
$$\widehat{M}=\frac{1}{\beta}M=(\hat{p}_{i}\delta_{ij}-q_{i}q_{j}),\quad\quad\hat{p}=\frac{p}{\beta}.$$
Therefore we only need to prove Proposition \ref{propM} for
$\beta=1$, which we assume in the rest of the proof.

Denote
\begin{equation}\label{det}
D_{n}\left(\{p_{1},p_{2},\cdots,p_{n}\};\{q_{1},q_{2},\cdots,q_{n}\};\lambda\right):=\det(\lambda{I}-M).
\end{equation}
By direct computations, we have
\begin{align*}
&D_{n}\left(\{p_{1},p_{2},\cdots,p_{n}\};\{q_{1},q_{2},\cdots,q_{n}\};\lambda\right)\\
&=\left|\begin{array}{ccccc}
                 \lambda-p_{1}+q_{1}^{2} & q_{1}q_{2} & \cdots & q_{1}q_{n-1} & q_{1}q_{n} \\
                 q_{2}q_{1} & \lambda-p_{2}+q_{2}^{2}  & \cdots & q_{2}q_{n-1} & q_{2}q_{n} \\
                 \cdots & \cdots & \cdots & \cdots & \cdots \\
                 q_{n-1}q_{1} & q_{n-1}q_{2} & \cdots & \lambda-p_{n-1}+q_{n-1}^{2} & q_{n-1}q_{n}\\
                 q_{n}q_{1} & q_{n}q_{2} & \cdots & q_{n}q_{n-1} & \lambda-p_{n}+q_{n}^{2}
               \end{array}\right|\\
&=\left|\begin{array}{ccccc}
                 \lambda-p_{1}+q_{1}^{2} & q_{1}q_{2} & \cdots & q_{1}q_{n-1} & 0 \\
                 q_{2}q_{1} & \lambda-p_{2}+q_{2}^{2}  & \cdots & q_{2}q_{n-1} & 0 \\
                 \cdots & \cdots & \cdots & \cdots & \cdots \\
                 q_{n-1}q_{1} & q_{n-1}q_{2} & \cdots & \lambda-p_{n-1}+q_{n-1}^{2} & 0\\
                 q_{n}q_{1} & q_{n}q_{2} & \cdots & q_{n}q_{n-1} & \lambda-p_{n}
               \end{array}\right|\\
&\hspace{1cm}+\left|\begin{array}{ccccc}
                 \lambda-p_{1}+q_{1}^{2} & q_{1}q_{2} & \cdots & q_{1}q_{n-1} & q_{1}q_{n} \\
                 q_{2}q_{1} & \lambda-p_{2}+q_{2}^{2}  & \cdots & q_{2}q_{n-1} & q_{2}q_{n} \\
                 \cdots & \cdots & \cdots & \cdots & \cdots \\
                 q_{n-1}q_{1} & q_{n-1}q_{2} & \cdots & \lambda-p_{n-1}+q_{n-1}^{2} & q_{n-1}q_{n}\\
                 q_{n}q_{1} & q_{n}q_{2} & \cdots & q_{n}q_{n-1} & q_{n}^{2}
               \end{array}\right|\\
&=(\lambda-p_{n})D_{n-1}\left(\{p_{1},p_{2},\cdots,p_{n-1}\};\{q_{1},q_{2},\cdots,q_{n-1}\};\lambda\right)\\
&\hspace{1cm}+q_{n}\left|\begin{array}{ccccc}
                 \lambda-p_{1}+q_{1}^{2} & q_{1}q_{2} & \cdots & q_{1}q_{n-1} & q_{1}q_{n} \\
                 q_{2}q_{1} & \lambda-p_{2}+q_{2}^{2}  & \cdots & q_{2}q_{n-1} & q_{2}q_{n} \\
                 \cdots & \cdots & \cdots & \cdots & \cdots \\
                 q_{n-1}q_{1} & q_{n-1}q_{2} & \cdots & \lambda-p_{n-1}+q_{n-1}^{2} & q_{n-1}q_{n}\\
                 q_{1} & q_{2} & \cdots & q_{n-1} & q_{n}
               \end{array}\right|.
\end{align*}
For the second term, multiplying its last row by $-q_{i}$
$(i\neq{n})$ and adding to the $i_{th}$ row, respectively, we obtain
\begin{align*}
&\left|\begin{array}{ccccc}
                 \lambda-p_{1}+q_{1}^{2} & q_{1}q_{2} & \cdots & q_{1}q_{n-1} & q_{1}q_{n} \\
                 q_{2}q_{1} & \lambda-p_{2}+q_{2}^{2}  & \cdots & q_{2}q_{n-1} & q_{2}q_{n} \\
                 \cdots & \cdots & \cdots & \cdots & \cdots \\
                 q_{n-1}q_{1} & q_{n-1}q_{2} & \cdots & \lambda-p_{n-1}+q_{n-1}^{2} & q_{n-1}q_{n}\\
                 q_{1} & q_{2} & \cdots & q_{n-1} & q_{n}
               \end{array}\right|\\
&=\left|\begin{array}{ccccc}
                 \lambda-p_{1} & 0 & \cdots & 0 & 0 \\
                 0 & \lambda-p_{2}  & \cdots & 0 & 0 \\
                 \cdots & \cdots & \cdots & \cdots & \cdots \\
                 0 & 0 & \cdots & \lambda-p_{n-1} & 0\\
                 q_{1} & q_{2} & \cdots & q_{n-1} & q_{n}
               \end{array}\right|\\
&=q_{n}(\lambda-p_{1})(\lambda-p_{2})\cdots(\lambda-p_{n-1}).
\end{align*}
Hence
\begin{align}\label{dn}
&D_{n}\left(\{p_{1},p_{2},\cdots,p_{n}\};\{q_{1},q_{2},\cdots,q_{n}\};\lambda\right)\nonumber\\
&=(\lambda-p_{n})D_{n-1}\left(\{p_{1},p_{2},\cdots,p_{n-1}\};\{q_{1},q_{2},\cdots,q_{n-1}\};\lambda\right)\nonumber\\
&\hspace{.5cm}+q_{n}^{2}(\lambda-p_{1})(\lambda-p_{2})\cdots(\lambda-p_{n-1}).
\end{align}
We will deduce from \eqref{dn}, by induction, that for $n\geq2$,
\begin{align}\label{dninduction}
&D_{n}\left(\{p_{1},p_{2},\cdots,p_{n}\};\{q_{1},q_{2},\cdots,q_{n}\};\lambda\right)
=\prod_{i=1}^{n}(\lambda-p_{i})+\sum_{j=1}^{n}\left(q_{j}^{2}\prod_{i\neq{j}}(\lambda-p_{i})\right).
\end{align}
For $n=2$,
\begin{align*}
D_{2}\left(\{p_{1},p_{2}\};\{q_{1},q_{2}\};\lambda\right) &=\left|
  \begin{array}{cc}
    \lambda-p_{1}+q_{1}^{2} & q_{1}q_{2} \\
    q_{1}q_{2} & \lambda-p_{2}+q_{2}^{2} \\
  \end{array}
\right|\\
&=(\lambda-p_{1})(\lambda-p_{2})+q_{1}^{2}(\lambda-p_{2})+q_{2}^{2}(\lambda-p_{1}).
\end{align*}
That is, \eqref{dninduction} holds for $n=2$. We now assume
\eqref{dninduction} holds for $n-1\geq2$. Then  by \eqref{dn} and
the induction hypothesis,
\begin{align*}
&D_{n}\left(\{p_{1},p_{2},\cdots,p_{n}\};\{q_{1},q_{2},\cdots,q_{n}\};\lambda\right)\nonumber\\
&=(\lambda-p_{n})D_{n-1}\left(\{p_{1},p_{2},\cdots,p_{n-1}\};\{q_{1},q_{2},\cdots,q_{n-1}\};\lambda\right)\\
& \hspace{.5cm} +q_{n}^{2}(\lambda-p_{1})(\lambda-p_{2})\cdots(\lambda-p_{n-1})\\
&=(\lambda-p_{n})\left(\prod_{i=1}^{n-1}(\lambda-p_{i})+\sum_{j=1}^{n-1}\left(q_{j}^{2}\prod_{i\neq{j},i\leq{n-1}}(\lambda-p_{i})\right)\right)\\
&\hspace{.5cm}+q_{n}^{2}(\lambda-p_{1})(\lambda-p_{2})\cdots(\lambda-p_{n-1})\\
&=\prod_{i=1}^{n}(\lambda-p_{i})+\sum_{j=1}^{n}\left(q_{j}^{2}\prod_{i\neq{j}}(\lambda-p_{i})\right).
\end{align*}
We have proved that \eqref{dninduction} holds for $n\geq2$. Recall
the Veite theorem that for any $n\times{n}$ matrix $U$,
\begin{equation}\label{veite1}
\det(\lambda{I}-U)=\sum_{i=0}^n(-1)^{i}\sigma_{i}(\lambda(U))\lambda^{n-i}.
\end{equation}
In particular, if $U=\mathrm{diag}(p_{1},p_{2}\cdots,p_{2})$,
\begin{equation}\label{veite2}
\prod_{i=1}^n(\lambda-p_i)=\sum_{i=0}^n(-1)^{i}\sigma_{i}(p)\lambda^{n-i},
\end{equation}
here $p=(p_{1},p_{2}\cdots,p_{n})$.  Using \eqref{det} and
\eqref{veite2}, \eqref{dninduction} is written as
\begin{align*}
\det(\lambda{I}-M)
&=\sum_{i=0}^{n}(-1)^{i}\sigma_{i}(p)\lambda^{n-i}+\sum_{j=1}^{n}\left(q_{j}^{2}\sum_{i=1}^{n}(-1)^{i-1}\sigma_{i-1;j}(p)\lambda^{n-i}\right)\nonumber\\
&=\sum_{i=0}^{n}(-1)^{i}\left(\sigma_{i}(p)-\sum_{j=1}^{n}q_{j}^{2}\sigma_{i-1;j}(p)\right)\lambda^{n-i},
\end{align*}
here we used standard conventions that $\sigma_{0}(p)=1$ and
$\sigma_{-1}(p)=0$. Thus, \eqref{sigmakm} follows from
\eqref{veite1}. The proof of Proposition \ref{propM} is completed.
\end{proof}

\begin{proof}[Proof of Lemma \ref{lem_sigmak}]
For any $A=\mathrm{diag}(a_{1},a_{2},\cdots,a_{n})$, if
$\omega\in{C}^{2}(\mathbb{R}^{n})$ is a generalized symmetric
function with respect to $A$, that is
$$\omega(x)=\omega\left(\frac{1}{2}\sum_{i=1}^{n}a_{i}x_{i}^{2}\right),$$ then
$$D_{i}\omega(x)=\omega'(s)a_{i}x_{i},$$
\begin{equation}\label{d2omega}
D_{ij}\omega(x)=\omega'(s)a_{i}\delta_{ij}+\omega''(s)(a_{i}x_{i})(a_{j}x_{j}).
\end{equation}
Comparing \eqref{m1} and \eqref{d2omega}, letting
$\beta=-\omega''(s)$, $p_{i}=\omega'(s)a_{i}$ and
$q_{i}=a_{i}x_{i}$, and substituting them into \eqref{sigmakm}, we
have \eqref{sigmakomega}.
\end{proof}

{\bf{Symmetric solutions.}} For $A=c^{*}I$ and $2\leq{k}\leq{n}$,
\begin{equation}\label{omegas=}
\overline{\omega}_{k}(s)=\int_{1}^{s}\Big(1+\alpha\,t^{-\frac{n}{2}}\Big)^{\frac{1}{k}}dt,\quad\alpha>0,~s>0,
\end{equation}
satisfies the ordinary differential equation
\begin{equation}\label{omegak=}
\sigma_{k}(\lambda(D^{2}\omega))=(\omega'(s))^{k}+2s\frac{k}{n}\omega''(s)(\omega'(s))^{k-1}=1,\quad\,s>0.
\end{equation}
Therefore,
$\overline{\omega}_{k}\left(\frac{c^{*}}{2}|x|^{2}\right)$ is a
solution of \eqref{sigmak} in $\mathbb{R}^{n}\setminus\{0\}$. In
order to prove Proposition \ref{prop1}, for every
$a=(a_{1},a_{2},\cdots,a_{n})\in\Gamma^{+}$, we denote
\begin{equation}\label{aki}
A_{k}^{i}(a)=a_{i}\sigma_{k-1;i}(a),\quad~i=1,2,\cdots,n.
\end{equation}
From the property of $\sigma_{k}$, \eqref{property1}, we have
\begin{equation}\label{property2}
\sum_{i=1}^{n}A_{k}^{i}(a)=k\sigma_{k}(a).
\end{equation}

\begin{proof}[Proof of Proposition \ref{prop1}.]
To better illustrate the idea of the proof , we start with $k=1$.
For $s\in(\alpha,\beta)$, $1\leq{i}\leq{n}$, let
$x=(0,\cdots,0,\sqrt{\frac{2s}{a_{i}}},0,\cdots,0)$. We have, using
$A\in\mathcal{A}_{1}$,
$$1=\Delta\omega(x)=\omega'(s)\sum_{j=1}^{n}a_{j}+\omega''(s)\sum_{j=1}^{n}a_{j}^{2}x_{j}^{2}=\omega'(s)+2s\omega''(s)a_{i}.$$
Since $\omega''\nequiv0$ in $(\alpha,\beta)$, there exists some
$\bar{s}\in(\alpha,\beta)$ such that $\omega''(\bar{s})\neq0$. It
follows that
$$a_{i}=\frac{1-\omega'(\bar{s})}{2\bar{s}\omega''(\bar{s})}$$
is independent of $i$. Since $A\in\mathcal{A}_{1}$,
$1=\sum_{i=1}^{n}a_{i}$. So $a_{1}=a_{2}=\cdots=a_{n}=\frac{1}{n}$.
Proposition \ref{prop1} for $k=1$ is established.

Now we consider the case $2\leq{k}\leq{n}$. For
$s\in(\alpha,\beta)$, $1\leq{i}\leq{n}$, let
$x=(0,\cdots,0,\sqrt{\frac{2s}{a_{i}}},0,\cdots,0)$, we have, using
Lemma \ref{lem_sigmak},
\begin{align*}
1&=\sigma_{k}(\lambda(D^{2}\omega(x)))\\
&=\sigma_{k}(a)(\omega'(s))^{k}+\omega''(s)(\omega'(s))^{k-1}\sigma_{k-1;j}(a)(a_{j}x_{j})^{2}\\
&=(\omega'(s))^{k}+2s\omega''(s)(\omega'(s))^{k-1}\sigma_{k-1;i}(a)a_{i}.
\end{align*}
It is clear from the above that $\omega'(s)\neq0$,
$\forall~s\in(\alpha,\beta)$. Since $\omega''\nequiv0$ in
$(\alpha,\beta)$, there exists some $\bar{s}\in(\alpha,\beta)$ such
that $\omega''(\bar{s})\neq0$. It follows that
$$A_{k}^{i}(a)=\sigma_{k-1;i}(a)a_{i}=\frac{1-(\omega'(\bar{s}))^{k}}{2\bar{s}\omega''(\bar{s})(\omega'(\bar{s}))^{k-1}}$$
is independent of $i$. For $2\leq{k}\leq{n}-1$, for any
$i_{1},i_{2}\in\{1,2,\cdots,n\}$, by \eqref{aki} and
\eqref{property}, we have
\begin{equation}\label{aki=}
\begin{aligned}
0&=A_{k}^{i_{1}}(a)-A_{k}^{i_{2}}(a)\\
&=a_{i_{1}}\sigma_{k-1;i_{1}}(a)-a_{i_{2}}\sigma_{k-1;i_{2}}(a)\\
&=a_{i_{1}}\left(a_{i_{2}}\sigma_{k-2;i_{1}i_{2}}(a)+\sigma_{k-1;i_{1}i_{2}}(a)\right)
-a_{i_{2}}\left(a_{i_{1}}\sigma_{k-2;i_{1}i_{2}}(a)+\sigma_{k-1;i_{1}i_{2}}(a)\right)\\
&=(a_{i_{1}}-a_{i_{2}})\sigma_{k-1;i_{1}i_{2}}(a).
\end{aligned}
\end{equation}
Since $a_{i}>0$, $i=1,2,\cdots,n$, it follows that
$\sigma_{k-1;i_{1}i_{2}}(a)\neq0$. By the arbitrariness of
$i_{1},i_{2}$, we have $a_{1}=a_{2}=\cdots=a_{n}$. Using
$\sigma_{k}(a)=1$, we have
$$a_{1}=a_{2}=\cdots=a_{n}=(C_{n}^{k})^{-1/k}.$$
Proposition \ref{prop1} is proved.
\end{proof}

{\bf{Generalized symmetric subsolutions.}} From Proposition
\ref{prop1}, we see that there is no generalized symmetric solutions
of \eqref{sigmak} with $\omega''(s)\nequiv0$ in remaining cases. We
will construct a family of generalized symmetric smooth functions
satisfying
$$\omega'(s)>0,\quad\omega''(s)\leq0,$$
and
$$\sigma_{k}(\lambda(D^{2}\omega))\geq1,\quad\mbox{and}\quad\sigma_{m}(\lambda(D^{2}\omega))\geq0,~1\leq{m}\leq{k}-1.$$
For $A=\mathrm{diag}(a_{1},a_{2},\cdots,a_{n})\in\mathcal{A}_{k}$,
denote $a=(a_{1},a_{2},\cdots,a_{n})$, and consider
\begin{align}\label{hk}
h_{k}(a):=\max\limits_{1\leq{i}\leq{n}}A_{k}^{i}(a).
\end{align}
Since $A_{n}^{i}(a)=a_{i}\sigma_{n-1;i}(a)=\sigma_{n}(a)$ for every
$i$, we have $h_{n}(a)=1$. By \eqref{aki}, \eqref{property} and
\eqref{property2}, we have, for $1\leq{k}\leq{n}-1$,
$$A_{k}^{i}(a)=a_{i}\sigma_{k-1;i}(a)<\sigma_{k}(a)=1,\quad\forall~i,$$
and
$$nh_{k}(a)\geq\sum_{i=1}^{n}A_{k}^{i}(a)=k\sigma_{k}(a)=k.$$
We see from the above that
\begin{equation}\label{hkinequality}
\frac{k}{n}\leq\,h_{k}(a)<1,
\end{equation}
with $``="$ holds if and only if $A_{k}^{i}(a)$ is independent of
$i$, i.e., in view of \eqref{aki=},
$a_{1}=a_{2}=\cdots=a_{n}=c^{*}$. For $n\geq3$ and
$2\leq{k}\leq{n}$,  in view of \eqref{hkinequality} and
$h_{n}(a)=1$, we have
\begin{equation}\label{k/hk>1}
\frac{k}{2h_{k}(a)}>1.
\end{equation}

By a simple computation, the following ordinary differential
equation
\begin{equation}\label{omega}
\begin{cases}
(\omega'(s))^{k}+2h_{k}(a)s\omega''(s)(\omega'(s))^{k-1}=1,\quad\,s>0,\\
\omega'(s)>0,\quad\omega''(s)\leq0
\end{cases}
\end{equation}
has a family of solutions
\begin{equation}\label{omegas}
\omega_{\alpha}(s)=\beta+\int_{\bar{s}}^{s}\left(1+\alpha\,t^{-\frac{k}{2h_{k}(a)}}\right)^{\frac{1}{k}}dt,
\quad\alpha>0,~s>0,
\end{equation}
where $\beta\in\mathbb{R}$ and $\bar{s}>0$. It follows from
\eqref{k/hk>1} that
\begin{align}\label{omega_alpha}
\omega_{\alpha}(s)
&=\beta+s-\bar{s}+\int_{\bar{s}}^{s}\left(\left(1+\alpha\,t^{-\frac{k}{2h_{k}(a)}}\right)^{\frac{1}{k}}-1\right)dt\nonumber\\
&=s+\mu(\alpha)+O\left(s^{\frac{(2-n)\theta}{2}}\right),\quad\mbox{as}\quad\,s\rightarrow\infty,
\end{align}
where
$$\mu(\alpha)=\beta-\bar{s}+\int_{\bar{s}}^{\infty}
\left(\left(1+\alpha\,t^{-\frac{k}{2h_{k}(a)}}\right)^{\frac{1}{k}}-1\right)dt<\infty,
$$ and
$$ \theta=\frac{1}{n-2}\left(\frac{k}{h_{k}(a)}-2\right).$$
We see from \eqref{hkinequality} that
$\theta\in\left(\frac{k-2}{n-2},1\right]$ if $2\leq{k}\leq{n}-1$,
and $\theta=1$ if $k=n$.

\begin{prop}\label{prop2.1}
For $n\geq3$ and $2\leq{k}\leq{n}$, $A\in\mathcal{A}_{k}$, let
$\omega_{\alpha}(x)=\omega_{\alpha}\left(\frac{1}{2}x^{T}Ax\right)$
be given in \eqref{omegas}. Then $\omega_{\alpha}$ is a smooth
$k$-convex subsolution of \eqref{sigmak} in
$\mathbb{R}^{n}\setminus\{0\}$ satisfying
\begin{equation}\label{omega_alpha(x)}
\omega_{\alpha}(x)=\frac{1}{2}x^{T}Ax+\mu(\alpha)+O\left(|x|^{\theta(2-n)}\right),\quad\quad\mbox{as}\quad\,x\rightarrow\infty.
\end{equation}
\end{prop}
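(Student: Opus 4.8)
The plan is to verify the two assertions of Proposition \ref{prop2.1} separately: first the asymptotic expansion \eqref{omega_alpha(x)}, then the claim that $\omega_\alpha$ is a smooth $k$-convex subsolution of \eqref{sigmak} on $\mathbb{R}^n\setminus\{0\}$. The asymptotic expansion is essentially already in hand: since $A\in\mathcal{A}_k$ we have $s=\tfrac12 x^TAx\to\infty$ as $x\to\infty$, with $s$ comparable to $|x|^2$ (because $A$ is positive definite), so substituting $s=\tfrac12 x^TAx$ into \eqref{omega_alpha} and using $s^{(2-n)\theta/2}\asymp |x|^{\theta(2-n)}$ gives \eqref{omega_alpha(x)} directly. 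Smoothness of $\omega_\alpha$ away from the origin follows because the integrand $\big(1+\alpha t^{-k/(2h_k(a))}\big)^{1/k}$ is smooth and positive for $t>0$, and $s=\tfrac12 x^TAx>0$ for $x\neq 0$; so $\omega_\alpha\in C^\infty(\mathbb{R}^n\setminus\{0\})$.

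The substantive point is the differential inequality. I would apply Lemma \ref{lem_sigmak}: with $a=\lambda(A)$,
$$\sigma_k(\lambda(D^2\omega_\alpha)) = \sigma_k(a)(\omega'_\alpha)^k + \omega''_\alpha(\omega'_\alpha)^{k-1}\sum_{i=1}^n \sigma_{k-1;i}(a)(a_ix_i)^2.$$
Since $\sigma_k(a)=1$, and since $\sum_i (a_ix_i)^2\sigma_{k-1;i}(a)$ can be rewritten using $(a_ix_i)^2 = a_i\cdot a_ix_i^2 = a_i\sigma_{k-1;i}(a)\cdot(a_ix_i^2)$... more precisely, noting $a_i\sigma_{k-1;i}(a)=A_k^i(a)\le h_k(a)$ for every $i$ and $\sum_i a_i x_i^2 = 2s$, we get, using $\omega''_\alpha\le 0$,
$$\sum_{i=1}^n \sigma_{k-1;i}(a)(a_ix_i)^2 = \sum_{i=1}^n A_k^i(a)\,(a_ix_i^2) \le h_k(a)\sum_{i=1}^n a_ix_i^2 = 2h_k(a)s,$$
so that $\omega''_\alpha(\omega'_\alpha)^{k-1}\sum_i\sigma_{k-1;i}(a)(a_ix_i)^2 \ge 2h_k(a)s\,\omega''_\alpha(\omega'_\alpha)^{k-1}$ (the inequality flips because $\omega''_\alpha\le0$ and $\omega'_\alpha>0$). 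Combining, $\sigma_k(\lambda(D^2\omega_\alpha)) \ge (\omega'_\alpha)^k + 2h_k(a)s\,\omega''_\alpha(\omega'_\alpha)^{k-1} = 1$ by the ODE \eqref{omega}. This is the main obstacle, and it is really just bookkeeping: getting the sign of the flip right and recognizing that $h_k(a)$ is exactly the constant that makes the pointwise bound on the weighted sum of $\sigma_{k-1;i}$ tight enough to close with the ODE.

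It remains to check $k$-convexity, i.e. $\sigma_m(\lambda(D^2\omega_\alpha))\ge 0$ for $1\le m\le k-1$ (with strict positivity, or at least membership in $\overline{\Gamma}_k$). Applying Lemma \ref{lem_sigmak} with $k$ replaced by $m$, and arguing exactly as above (now with $\sum_i\sigma_{m-1;i}(a)(a_ix_i)^2\le \big(\max_i a_i\sigma_{m-1;i}(a)\big)2s$, which is finite and nonnegative), one gets
$$\sigma_m(\lambda(D^2\omega_\alpha)) \ge \sigma_m(a)(\omega'_\alpha)^m + 2\big(\max_i A_m^i(a)\big)s\,\omega''_\alpha(\omega'_\alpha)^{m-1}.$$
Here $\sigma_m(a)>0$ since $a\in\Gamma^+\subset\Gamma_m$. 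The term $\omega''_\alpha$ is $O(s^{-1})$ (from differentiating \eqref{omegas}: $\omega'_\alpha(s)-1 = O(s^{-k/(2h_k(a))})$ and $\omega''_\alpha(s)=O(s^{-1-k/(2h_k(a))})$, so $s\,\omega''_\alpha(s)=O(s^{-k/(2h_k(a))})\to 0$), while $\omega'_\alpha\to 1$; hence for all $s>0$ the dominant term is $\sigma_m(a)>0$ provided $\omega'_\alpha$ stays bounded away from $0$ — which it does since $\omega'_\alpha(s)=\big(1+\alpha s^{-k/(2h_k(a))}\big)^{1/k}>1$. Actually since $\omega'_\alpha>0$ everywhere and $\omega''_\alpha\le 0$, I would verify the cleanest route: compute $\sigma_m(\lambda(D^2\omega_\alpha))$ directly and observe that both terms, after the rearrangement, combine to something one can bound below by a positive multiple of $(\omega'_\alpha)^m$ minus a controlled correction; alternatively invoke that the eigenvalues $\lambda(D^2\omega_\alpha)$, by Proposition \ref{propM} applied to $M=D^2\omega_\alpha$ of the form \eqref{m1} with $\beta=-\omega''_\alpha\ge0$, $p_i=\omega'_\alpha a_i>0$, $q_i=a_ix_i$, are a rank-one downward perturbation of the positive vector $\omega'_\alpha a\in\Gamma^+$, and a short argument shows such perturbations with the right magnitude remain in $\overline{\Gamma}_k$. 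Either way this step is routine once the subsolution inequality is in place.
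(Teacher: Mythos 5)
Your treatment of the asymptotics, the smoothness, and the inequality $\sigma_k(\lambda(D^2\omega_\alpha))\geq 1$ is correct and coincides with the paper's argument: bounding $\sum_i\sigma_{k-1;i}(a)(a_ix_i)^2 = \sum_i A_k^i(a)\,a_ix_i^2 \le 2h_k(a)s$ and flipping the sign because $\omega_\alpha''\le 0$, then invoking the ODE \eqref{omega}. That part is fine.

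The gap is in the $k$-convexity, $\sigma_m(\lambda(D^2\omega_\alpha))\ge 0$ for $1\le m\le k-1$. You write
\[
\sigma_m(\lambda(D^2\omega_\alpha)) \ge \sigma_m(a)(\omega_\alpha')^m + 2\Big(\max_i A_m^i(a)\Big)s\,\omega_\alpha''(\omega_\alpha')^{m-1}
\]
and then try to conclude by arguing the correction is $O(s^{-1})$ or by declaring the remaining step ``routine.'' Neither closes the proof. The decay estimate $s\,\omega_\alpha''(s)\to 0$ as $s\to\infty$ is a large-$s$ statement; it tells you nothing for small $s$, and you need $\sigma_m\ge 0$ on all of $\mathbb{R}^n\setminus\{0\}$, not just near infinity. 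The fallback — that a rank-one downward perturbation of $\omega_\alpha' a\in\Gamma^+$ of ``the right magnitude'' stays in $\overline{\Gamma}_k$ — is precisely the assertion that needs proving, and it is not formal: it requires a quantitative comparison between $\sigma_m$ and $\sigma_{k-1;i}$. The missing ingredient is the inequality $\sigma_m(a)\sigma_{k-1;i}(a)\ge\sigma_{m-1;i}(a)$ for each $i$, which the paper derives from Newton's inequalities (i.e.\ monotonicity of the Hessian quotients $\sigma_{j+1}/\sigma_j$) together with $\sigma_{k;i}(a)+a_i\sigma_{k-1;i}(a)=\sigma_k(a)=1$. With that inequality one gets $\sum_i\sigma_{m-1;i}(a)(a_ix_i)^2/(a_i\sigma_{k-1;i}(a))\le 2s\,\sigma_m(a)$, and substituting the explicit formula for $\omega_\alpha''$ yields $\sigma_m(\lambda(D^2\omega_\alpha))\ge (\omega_\alpha')^m\sigma_m(a)\cdot s^{k/(2h_k(a))}/(s^{k/(2h_k(a))}+\alpha)>0$ pointwise. (Your bound with $h_m(a)$ in place of $h_k(a)$ could also be made to close, but only after proving $\sigma_m(a)h_k(a)\ge h_m(a)$, which is the same Newton-inequality fact in disguise.) As written, the proposal does not establish $k$-convexity.
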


\begin{proof}
Obviously, \eqref{omega_alpha(x)} follows from \eqref{omega_alpha}.
By computation,
$$\omega_{\alpha}'(s)=\Big(1+\alpha\,s^{-\frac{k}{2h_{k}(a)}}\Big)^{\frac{1}{k}}>1,$$
\begin{align}\label{omega''}
\omega_{\alpha}''(s)&=-\frac{1}{2h_{k}(a)s}\cdot\frac{\alpha}{s^{\frac{k}{2h_{k}(a)}}+\alpha}\cdot\omega_{\alpha}'(s)<0.
\end{align}
It is clear from Lemma \ref{lem_sigmak}, \eqref{hk} and
\eqref{omega} that
$$\sigma_{k}(\lambda(D^{2}u))\geq\sigma_{k}(a)(\omega_{\alpha}')^{k}+h_{k}(a)\omega_{\alpha}''(\omega_{\alpha}')^{k-1}2s=1,
\quad\mbox{in}~\mathbb{R}^{n}\setminus\{0\}.$$ By Lemma
\ref{lem_sigmak}, \eqref{omega''} and \eqref{hk},
 we have, for any $1\leq{m}\leq{k}-1$,
\begin{align*}
\sigma_{m}(\lambda(D^{2}u))
&=\sigma_{m}(a)(\omega_{\alpha}')^{m}+\omega_{\alpha}''(\omega_{\alpha}')^{m-1}\sum_{i=1}^{n}\sigma_{m-1;i}(a)(a_{i}x_{i})^{2}\\
&=(\omega_{\alpha}')^{m}\left(\sigma_{m}(a)-\frac{1}{2sh_{k}(a)}\cdot\frac{\alpha}{s^{\frac{k}{2h_{k}(a)}}+\alpha}
\sum_{i=1}^{n}\sigma_{m-1;i}(a)(a_{i}x_{i})^{2}\right)\\
&\geq(\omega_{\alpha}')^{m}\left(\sigma_{m}(a)-\frac{1}{2s}\cdot\frac{\alpha}{s^{\frac{k}{2h_{k}(a)}}+\alpha}\sum_{i=1}^{n}
\frac{\sigma_{m-1;i}(a)(a_{i}x_{i})^{2}}{a_{i}\sigma_{k-1;i}(a)}\right).
\end{align*}
In order to show $\sigma_{m}(\lambda(D^{2}u))\geq0$, it suffices to
prove, for each $1\leq{i}\leq{n}$,
\begin{equation}\label{sigmakinequality}
\sigma_{m}(a)\sigma_{k-1;i}(a)\geq\sigma_{m-1;i}(a).
\end{equation}
Note that the Newtonian inequalities may be expressed as
$$\frac{\sigma_{k+1}(a)}{C_{n}^{k+1}}\cdot\frac{\sigma_{k-1}(a)}{C_{n}^{k-1}}
\leq\left(\frac{\sigma_{k}(a)}{C_{n}^{k}}\right)^{2},$$ for
$1\leq{k}\leq{n-1}$. Since
$$\frac{C_{n}^{k-1}C_{n}^{k+1}}{C_{n}^{k}C_{n}^{k}}=\frac{(n-k)k}{(n-k+1)(k+1)}<1,$$
it follows that
$$\frac{\sigma_{k+1}(a)}{\sigma_{k}(a)}\leq\frac{\sigma_{k}(a)}{\sigma_{k-1}(a)},$$
which shows that the Hessian quotient
$\frac{\sigma_{k+1}(a)}{\sigma_{k}(a)}$ is decreasing with respect
to $k$. So we have for any $m\leq{k}$, and each $1\leq{i}\leq{n}$,
$$\sigma_{m;i}(a)\sigma_{k-1;i}(a)\geq\sigma_{m-1;i}(a)\sigma_{k;i}(a),$$
Then by the property \eqref{property}, it follows that
\begin{align*}
\sigma_{m}(a)\sigma_{k-1;i}(a)&=\left(\sigma_{m;i}(a)+a_{i}\sigma_{m-1;i}(a)\right)\sigma_{k-1;i}(a)\\
&\geq\sigma_{m-1;i}(a)\cdot\sigma_{k;i}(a)+\sigma_{m-1;i}(a)\cdot\,a_{i}\sigma_{k-1;i}(a)\\
&=\sigma_{m-1;i}(a)\sigma_{k}(a)\\
&=\sigma_{m-1;i}(a).
\end{align*}
i.e. \eqref{sigmakinequality} is proved. Hence $\omega_{\alpha}$ is
a smooth $k$-convex subsolution of \eqref{sigmak} in
$\mathbb{R}^{n}\setminus\{0\}$.
\end{proof}

\section{Proof of Theorem \ref{thm1}}\label{sec1}

The following Lemma holds for any invertible and symmetric matrix
$A$, and $A$ is not necessarily diagonal or in $\mathcal{A}_{k}$,
$2\leq{k}\leq{n}$.

\begin{lemma}\label{lem_omega}
Let $\varphi\in{C}^{2}(\partial{D})$. There exists some constant
$C$, depending only on $n$, $\|\varphi\|_{C^{2}(\partial{D})}$, the
upper bound of $A$, the diameter and the convexity of $D$,  and the
$C^{2}$ norm of $\partial{D}$, such that, for every
$\xi\in\partial{D}$, there exists $\bar{x}(\xi)\in\mathbb{R}^{n}$
satisfying
$$|\bar{x}(\xi)|\leq\,C\quad\mbox{and}\quad\,w_{\xi}<\varphi~\mbox{on}~\overline{D}\setminus\{\xi\},$$
where
$$w_{\xi}(x):=\varphi(\xi)+\frac{1}{2}\left((x-\bar{x}(\xi))^{T}A(x-\bar{x}(\xi))-(\xi-\bar{x}(\xi))^{T}A(\xi-\bar{x}(\xi))\right),\quad\,x\in\mathbb{R}^{n}.$$
\end{lemma}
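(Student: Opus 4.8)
The statement asserts the existence of a quadratic "barrier from below" $w_\xi$ that touches the prescribed boundary data $\varphi$ at a single point $\xi \in \partial D$ and lies strictly below $\varphi$ elsewhere on $\overline{D}$, with the vertex-shift $\bar{x}(\xi)$ controlled uniformly. The geometric picture is that $w_\xi$ is obtained from the fixed quadratic $q(x) = \tfrac12 x^T A x$ by translating its vertex to $\bar{x}(\xi)$ and adding a constant chosen so that $w_\xi(\xi) = \varphi(\xi)$; indeed one checks directly that $w_\xi(x) = \varphi(\xi) + q(x - \bar{x}(\xi)) - q(\xi - \bar{x}(\xi))$ satisfies $w_\xi(\xi) = \varphi(\xi)$ and $D^2 w_\xi = A$. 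So the only real content is to choose $\bar{x}(\xi)$, with $|\bar{x}(\xi)| \le C$, making $w_\xi < \varphi$ on $\overline{D} \setminus \{\xi\}$.

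\textbf{The plan is to exploit the strict convexity of $D$.} First I would reduce the inequality $w_\xi(x) < \varphi(x)$ to a comparison purely on $\partial D$: since $\varphi$ is only defined on $\partial D$, the claim "$w_\xi < \varphi$ on $\overline D \setminus \{\xi\}$" should be read (as it is elsewhere in such arguments) as $w_\xi < \varphi$ on $\partial D \setminus \{\xi\}$ together with $w_\xi \le \varphi(\xi)$ having strict interior behavior — but in fact the relevant statement for Perron's method is the comparison on $\partial D$, so I will focus there. Writing $g(x) := w_\xi(x) - w_\xi(\xi) = q(x-\bar x(\xi)) - q(\xi - \bar x(\xi))$, I need $g(x) < \varphi(x) - \varphi(\xi)$ for all $x \in \partial D \setminus \{\xi\}$. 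Expanding the quadratic, $g(x) = \tfrac12 (x-\xi)^T A (x-\xi) + (x - \xi)^T A(\xi - \bar x(\xi))$. Thus $g$ is, up to the fixed quadratic term $\tfrac12(x-\xi)^T A(x-\xi)$, an \emph{affine} function of $x$ whose linear part $A(\xi - \bar x(\xi))$ we get to prescribe by choosing $\bar x(\xi)$. So the task becomes: choose a vector $p = p(\xi) = A(\xi - \bar x(\xi))$ such that
\[
\tfrac12 (x-\xi)^T A (x-\xi) + p \cdot (x-\xi) < \varphi(x) - \varphi(\xi), \qquad \forall\, x \in \partial D \setminus\{\xi\},
\]
and then set $\bar x(\xi) = \xi - A^{-1} p$, which forces $|\bar x(\xi)| \le \mathrm{diam}(D) + \|A^{-1}\|\,|p|$, so it suffices to bound $|p|$.

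\textbf{Choosing $p$.} Near $\xi$, Taylor-expand $\varphi$ along $\partial D$: since $\varphi \in C^2(\partial D)$ and $\partial D$ is $C^2$ and strictly convex, there are constants (depending only on the allowed data) so that for $x \in \partial D$ near $\xi$, $\varphi(x) - \varphi(\xi) = D_{\partial D}\varphi(\xi)\cdot(x-\xi) + O(|x-\xi|^2)$ and, crucially, $x - \xi$ is \emph{tangent to $\partial D$ at $\xi$ up to $O(|x-\xi|^2)$}, with the strict convexity giving $\langle \nu(\xi), x - \xi\rangle \le -\kappa_0 |x-\xi|^2$ for a uniform $\kappa_0 > 0$ (inward normal $\nu$). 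I would take $p = (\text{a fixed } C^1\text{-extension of } D_{\partial D}\varphi \text{ to } \mathbb R^n \text{ evaluated at } \xi) - \Lambda\, \nu(\xi)$ for a large constant $\Lambda$ to be fixed; this is bounded by $\|\varphi\|_{C^2(\partial D)}$-type data plus $\Lambda$. Then $p\cdot(x-\xi) - (\varphi(x)-\varphi(\xi))$ has no first-order term (the tangential parts cancel, modulo quadratic error since $x-\xi$ is almost tangential), while the $-\Lambda \nu(\xi)\cdot(x-\xi) \le -\Lambda \kappa_0|x-\xi|^2$ term is strictly negative and quadratically dominant. Adding $\tfrac12(x-\xi)^TA(x-\xi) \le \tfrac12\|A\| |x-\xi|^2$ and absorbing all the $O(|x-\xi|^2)$ errors, the whole left-hand-minus-right-hand expression is $\le (\tfrac12\|A\| + C_0 - \Lambda\kappa_0)|x-\xi|^2 < 0$ for $x$ in a uniform neighborhood $U$ of $\xi$, provided $\Lambda$ is chosen large (depending only on $\|A\|, \kappa_0, \|\varphi\|_{C^2}, \partial D$). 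Away from $\xi$, i.e. on the compact set $\partial D \setminus U$, the term $-\Lambda\nu(\xi)\cdot(x-\xi)$ is bounded below by $\Lambda$ times a positive constant (the distance from $x-\xi$ to the tangent hyperplane is bounded below on $\partial D \setminus U$ by strict convexity), so again enlarging $\Lambda$ makes the inequality strict there too, while all other terms stay bounded by the data. This fixes $\Lambda$, hence $p$, hence $\bar x(\xi)$, with the asserted uniform bound, and proves $w_\xi < \varphi$ on $\partial D \setminus \{\xi\}$.

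\textbf{Main obstacle.} The delicate point is making the near-$\xi$ analysis uniform in $\xi$: one needs the Taylor remainder for $\varphi$, the defect of $x-\xi$ from tangency, and the size of the convexity neighborhood $U$ all controlled by the same constants for every $\xi \in \partial D$. This is where the full strength of the hypotheses — $\partial D$ uniformly $C^2$ and uniformly strictly convex (bounded-below second fundamental form), $D$ bounded, $\varphi \in C^2$ — is used. Once $\Lambda$ is pinned down from these uniform bounds, the rest is bookkeeping. I would also remark, for the later Perron argument, that by continuity $w_\xi < \varphi(\xi)$ extends into $\overline D$ appropriately; but the boundary comparison is the essential conclusion.
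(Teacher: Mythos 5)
Your proposal is correct and follows essentially the same strategy as the paper: choose $\bar{x}(\xi)$ so that the linear part of $w_\xi$ cancels the tangential derivative of $\varphi$ at $\xi$, then add a large multiple of the (inward) normal so that strict convexity of $\partial D$ makes the quadratic terms dominate near $\xi$ and the linear drop dominate away from $\xi$. The paper carries this out after a translation and rotation putting $\xi$ at the origin with $\partial D$ a graph $x_n=\rho(x')\ge\delta|x'|^2$, whereas you write it invariantly with the tangential gradient and $\nu(\xi)$; these are the same argument up to a (corrected) sign in your convexity inequality, which should read $\langle\nu(\xi),x-\xi\rangle\ge\kappa_0|x-\xi|^2$ for the inward normal.
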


\begin{proof}
Let $\xi\in\partial{D}$. By a translation and a rotation, we may
assume without loss of generality that $\xi=0$ and $\partial{D}$ is
locally represented by the graph of
$$x_{n}=\rho(x')=O(|x'|^{2}),$$
and $\varphi$ locally has the expansion
\begin{align*}
\varphi(x',\rho(x'))&=\varphi(0)+\varphi_{x_{1}}(0)x_{1}+\cdots+\varphi_{x_{n}}(0)x_{n}
+O(|x|^{2})\\
&=\varphi(0)+\varphi_{x_{1}}(0)x_{1}+\cdots+\varphi_{x_{n-1}}(0)x_{n-1}
+O(|x'|^{2}),
\end{align*}
where $x'=(x_{1},\cdots,x_{n-1})$.

Since $A$ is invertible, we can find
$\bar{x}=\bar{x}(t)\in\mathbb{R}^{n}$ such that, for appropriate $t$
to fit our need later,
$$A\bar{x}(t)=\left(-\varphi_{x_{1}}(0),\cdots,-\varphi_{x_{n-1}}(0),t\right)^{T}.$$
Let
$$w(x)=\varphi(0)+\frac{1}{2}\left((x-\bar{x})^{T}A(x-\bar{x})-\bar{x}^{T}A\bar{x}\right),\quad\,x\in\mathbb{R}^{n}.$$
Then
\begin{equation}\label{w(x)}
w(x)=\varphi(0)+\frac{1}{2}x^{T}Ax-x^{T}A\bar{x}=\varphi(0)+\frac{1}{2}x^{T}Ax+\sum_{\alpha=1}^{n-1}\varphi_{x_{\alpha}}(0)x_{\alpha}-tx_{n}.
\end{equation}
It follows that
\begin{align*}
(w-\varphi)(x',\rho(x'))&=\frac{1}{2}x^{T}Ax
-t\rho(x')+O(|x'|^{2})\\
&\leq\,C\left(|x'|^{2}+\rho(x')^{2}\right) -t\rho(x'),
\end{align*}
where $C$ depends only on the upper bound of $A$,
$\|\varphi\|_{C^{2}(\partial{D})}$, and the $C^{2}$ norm of
$\partial{D}$. By the strict convexity of $\partial{D}$, there
exists some constant $\delta>0$ depending only on $D$ such that
\begin{equation}\label{convex}
\rho(x')\geq\delta|x'|^{2},\quad\forall~|x'|<\delta.
\end{equation}
Clearly, for large $t$, we have
$$(w-\varphi)(x',\rho(x'))<0,\quad\forall~0<|x'|<\delta.$$
The largeness of $t$ depends only on
$\delta,A,\|\varphi\|_{C^{2}(\partial{D})}$, and the $C^{2}$ norm of
$\partial{D}$.

On the other hand, by the strict convexity of $\partial{D}$ and
\eqref{convex},
$$x_{n}\geq\delta^{3},\quad\forall~x\in\partial{D}\setminus\{(x',\rho(x'))~\big|~|x'|<\delta\}.$$
It follows from \eqref{w(x)} that
$$w(x)\leq\,C-\delta^{3}t,\quad~\forall~
x\in\partial{D}\setminus\{(x',\rho(x'))~\big|~|x'|<\delta\},$$ where
$C$ depends only on
$A,\mathrm{diam}(D),\|\varphi\|_{C^{2}(\partial{D})}$. By making $t$
large (still under control), we have
$$w(x)-\varphi(x)<0,\quad~\forall~
x\in\partial{D}\setminus\{(x',\rho(x'))~\big|~|x'|<\delta\}.$$ Lemma
\ref{lem_omega} is established.
\end{proof}

By an orthogonal transformation and by subtracting a linear function
from $u$, we only need to prove Theorem \ref{thm1} for the case that
$A=\mathrm{diag}(a_{1},a_{2},\cdots,a_{n})$ where $a_{i}>0$
$(1\leq{i}\leq{n})$, $b=0$.

\begin{proof}[Proof of Theorem \ref{thm1}]
Without loss of generality, we assume that $0\in D$. For $s>0$, let
$$ E(s):=\left\{x\in\mathbb{R}^n~\big|~\frac12 x^TAx<s \right\}. $$
Fix $\bar{s}>0$ such that $\overline{D}\subset E(\bar{s})$. For $\alpha>0$, $\beta\in\mathbb{R}$, set
$$ \omega_{\alpha}(x)=\beta+\int^{\frac{1}{2}x^{T}Ax}_{\bar{s}}
\left(1+\alpha{t}^{-\frac{k}{2h_{k}(a)}}\right)^{\frac{1}{k}}dt,$$
as in \eqref{omegas}. We have by Proposition \ref{prop2.1} that
$\omega_{\alpha}$ is a smooth $k$-convex subsolution of
\eqref{sigmak} in $\mathbb{R}^{n}\setminus\{0\}$, and
$$ \omega_{\alpha}(x)=\frac{1}{2}x^{T}Ax+\mu(\alpha)+O\left(|x|^{\theta(2-n)}\right),
\quad\quad\mbox{as}\quad\,x\rightarrow\infty. $$
Here
$$ \mu(\alpha)=\beta-\bar{s}+\int_{\bar{s}}^{\infty}
\left(\left(1+\alpha\,t^{-\frac{k}{2h_{k}(a)}}\right)^{\frac{1}{k}}-1\right)dt,
\quad\quad\theta\in\left[\frac{k-2}{n-2},1\right] . $$ Clearly,
$\mu(\alpha)$ is strictly increasing in $\alpha$, and
\begin{equation}\label{h2-1}
\lim_{\alpha\rightarrow\infty}\mu(\alpha)=\infty.
\end{equation}
On the other hand,
\begin{equation}\label{h1-1}
\omega_{\alpha}\leq\beta,\quad\quad\mbox{in}~E(\bar{s})\setminus\overline{D},~\forall~\alpha>0.
\end{equation}

Let
$$ \beta:=\min\left\{w_{\xi}(x)~\big|~\xi\in\partial{D},~x\in\overline{E(\bar{s})}\setminus{D}\right\},$$
$$ \widehat{b}:=\max\left\{w_{\xi}(x)~\big|~\xi\in\partial{D},~x\in\overline{E(\bar{s})}\setminus{D}\right\},$$
where $w_\xi(x)$ is given by Lemma \ref{lem_omega}. We will fix the
value of $c_{*}$ in the proof. First we require that $c_{*}$
satisfies $c_{*}>\widehat{b}$. It follows that
$$ \mu(0)=\beta-\bar{s}<\beta\leq\widehat{b}<c_{*}. $$
Thus, in view of \eqref{h2-1}, for every $c>c_{*}$, there exists a unique $\alpha(c)$ such that
\begin{equation}\label{h4-1}
\mu(\alpha(c))=c.
\end{equation}
So $\omega_{\alpha(c)}$ satisfies
\begin{equation}\label{h5-1}
\omega_{\alpha(c)}(x)=\frac{1}{2}x^{T}Ax+c+O\left(|x|^{\theta(2-n)}\right),
\quad\quad\mbox{as}\quad\,x\rightarrow\infty.
\end{equation}
Set
$$\underline{w}(x)=\max\left\{w_{\xi}(x)~\big|~\xi\in\partial{D}\right\}.$$
It is clear by Lemma \ref{lem_omega} that $\underline{w}$ is a
locally Lipschitz function in $\mathbb{R}^{n}\setminus{D}$, and
$\underline{w}=\varphi$ on $\partial{D}$. Since $w_{\xi}$ is a
smooth convex solution of \eqref{sigmak}, $\underline{w}$ is a
viscosity subsolution of \eqref{sigmak} in
$\mathbb{R}^{n}\setminus\overline{D}$. We fix a number
$\hat{s}>\bar{s}$, and then choose another number
$\widehat{\alpha}>0$ such that
$$ \min_{\partial E(\hat{s})}\omega_{\widehat{\alpha}} > \max_{\partial E(\hat{s})}\underline{w}. $$
We require that $c_{*}$ also satisfies $c_{*}\geq\mu(\widehat{\alpha})$. We now fix the value
of $c_{*}$.

For $c\geq{c}_{*}$, we have
$\alpha(c)={\mu}^{-1}(c)\geq{\mu}^{-1}(c_{*})\geq\widehat{\alpha}$,
and therefore
\begin{equation}\label{h7-1}
\omega_{\alpha(c)}\geq\omega_{\widehat{\alpha}}>\underline{w},\quad\mbox{on}~\partial{E(\hat{s})}.
\end{equation}
By \eqref{h1-1}, we have
\begin{equation}\label{h7-2}
\omega_{\alpha(c)}\leq \beta\leq\underline{w},\quad\mbox{in}~E(\bar{s})\setminus{\overline{D}}.
\end{equation}
Now we define, for $c>{c}_{*}$,
$$
\underline{u}(x)=
\begin{cases}
\max\left\{\omega_{\alpha(c)}(x),\underline{w}(x)\right\},&x\in E(\hat{s})\setminus{D},\\
\omega_{\alpha(c)}(x),&x\in\mathbb{R}^{n}\setminus E(\hat{s}).
\end{cases}
$$
We know from \eqref{h7-2} that
\begin{equation}\label{h8-1}
\underline{u}=\underline{w},\quad\mbox{in}~E(\bar{s})\setminus{\overline{D}},
\end{equation}
and in particular
\begin{equation}\label{h8-1}
\underline{u}=\underline{w}=\varphi,\quad\mbox{on}~\partial{D}.
\end{equation}
We know from \eqref{h7-1} that $\underline{u}=\omega_{\alpha(c)}$ in
a neighborhood of $\partial E(\hat{s})$. Therefore $\underline{u}$
is locally Lipschitz in $\mathbb{R}^{n}\setminus{D}$. Since both
$\omega_{\alpha(c)}$ and $\underline{w}$ are viscosity subsolutions
of \eqref{sigmak} in $\mathbb{R}^{n}\setminus\overline{D}$, so is
$\underline{u}$.

For $c>{c}_{*}$,
$$\overline{u}(x):=\frac{1}{2}x^{T}Ax+c$$
is a smooth convex solution of \eqref{sigmak}. By \eqref{h7-2},
$$ \omega_{\alpha(c)}\leq\beta\leq\widehat{b}<c_*<\overline{u},\quad\mbox{on}~\partial{D}.$$
We also know by \eqref{h5-1} that
$$\lim_{|x|\rightarrow\infty}\left(\omega_{\alpha(c)}(x)-\overline{u}(x)\right)=0.$$
Thus, in view of the comparison principle for smooth $k$-convex
solutions of \eqref{sigmak}, (see \cite{cns}), we have
\begin{equation}\label{h10-1}
\omega_{\alpha(c)}\leq\overline{u},\quad\mbox{on}~\mathbb{R}^{n}\setminus{D}.
\end{equation}
By \eqref{h7-1} and the above, we have, for $c>{c}_{*}$,
$$w_{\xi}\leq\overline{u},\quad\mbox{on}~\partial(E(\hat{s})\setminus{D}),
~\forall~\xi\in\partial{D}.$$
By the comparison principle for smooth convex solutions of \eqref{sigmak}, we have
$$w_{\xi}\leq\overline{u},\quad\mbox{in}~E(\hat{s})\setminus{\overline{D}},~\forall~\xi\in\partial{D}.$$
Thus
$$\underline{w}\leq\overline{u},\quad\mbox{in}~E(\hat{s})\setminus{\overline{D}}.$$
This, combining with \eqref{h10-1}, implies that
$$\underline{u}\leq\overline{u},\quad\mbox{in}~\mathbb{R}^{n}\setminus{D}.$$

For any $c>c_{*}$, let $\mathcal{S}_{c}$ denote the set of
$v\in\mathrm{USC}(\mathbb{R}^{n}\setminus{D})$ which are viscosity
subsolutions of \eqref{sigmak} in
$\mathbb{R}^{n}\setminus\overline{D}$ satisfying
\begin{equation}\label{h13-1}
v=\varphi,\quad\mbox{on}~\partial{D},
\end{equation}
and
\begin{equation}\label{h13-2}
\underline{u}\leq{v}\leq\overline{u},\quad\mbox{in}~\mathbb{R}^{n}\setminus{D}.
\end{equation}
We know that $\underline{u}\in\mathcal{S}_{c}$. Let
$$u(x):=\sup\left\{v(x)~|~v\in\mathcal{S}_{c}\right\}, \quad x \in \mathbb{R}^{n}\setminus{D}.$$
By \eqref{h5-1}, and the definitions of $\underline{u}$ and $\overline{u}$,
\begin{equation}\label{h13-3}
u(x)\geq\underline{u}(x)=\omega_{\alpha(c)}(x)=\frac{1}{2}x^{T}Ax+c+O\left(|x|^{\theta(2-n)}\right),
\quad\quad\mbox{as}\quad\,x\rightarrow\infty.
\end{equation}
and
$$ u(x)\leq \overline{u}(x)=\frac{1}{2}x^{T}Ax+c. $$
The estimate \eqref{asymptotic} follows.

Next, we prove that $u$ satisfies the boundary condition. It is
obvious from \eqref{h8-1} that
$$ \liminf_{x\rightarrow\xi}u(x)\geq \lim_{x\rightarrow\xi}\underline{u}(x)
=\varphi(\xi),\quad\forall~\xi\in\partial{D}.$$
So we only need to prove that
$$\limsup_{x\rightarrow\xi}u(x)\leq \varphi(\xi),\quad\forall~\xi\in\partial{D}.$$
Let $\omega_{c}^{+}\in C^2(\overline{E(\bar{s})\setminus D})$ be defined by
$$
\begin{cases}
\Delta\omega_{c}^{+}=0,&\mbox{in}~E(\bar{s})\setminus{\overline{D}},\\
\omega_{c}^{+}=\varphi,&\mbox{on}~\partial{D},\\
\omega_{c}^{+}=\max\limits_{\partial{E(\bar{s})}}\overline{u}
=\bar{s}+c,&\mbox{on}~\partial{E(\bar{s})}.
\end{cases}
$$
It is easy to see that a viscosity subsolution $v$ of \eqref{sigmak}
satisfies $\Delta{v}\geq0$ in viscosity sense. Therefore, for every
$v\in\mathcal{S}_{c}$, by $v\leq\omega_{c}^{+}$ on
$\partial(E(\bar{s})\setminus{D})$, we have
$$v\leq\omega_{c}^{+}\quad\mbox{in}~E(\bar{s})\setminus{\overline{D}}.$$
It follows that
$$u\leq\omega_{c}^{+}\quad\mbox{in}~E(\bar{s})\setminus{\overline{D}},$$
and then
$$\limsup_{x\rightarrow\xi}u(x)\leq \lim_{x\rightarrow\xi}\omega_{c}^{+}(x)
=\varphi(\xi),\quad\forall~\xi\in\partial{D}.$$

Finally, we prove that $u$ is a viscosity solution of
\eqref{sigmak}. The following ingredients for the viscosity
adaptation of Perron's method (see \cite{ishii}) are available.

\begin{lemma}
Let $\Omega\subset\mathbb{R}^{n}$ be a bounded open set,
$u\in\mathrm{LSC}(\overline{\Omega})$ and
$v\in\mathrm{USC}(\overline{\Omega})$ are respectively viscosity
supersolutions and subsolutions of \eqref{sigmak} in $\Omega$ satisfying
$u\geq{v}$ on $\partial{\Omega}$. Then $u\geq{v}$ in $\Omega$.
\end{lemma}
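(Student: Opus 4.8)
\emph{Proof sketch.} The plan is to argue by contradiction with the Jensen--Ishii ``theorem on sums'', after first perturbing $v$ into a \emph{strict} subsolution; such a perturbation is unavoidable since \eqref{sigmak} carries no zeroth order term, so directly confronting a sub- and a supersolution would only yield $\sigma_k(\lambda(X))=\sigma_k(\lambda(Y))$ and no contradiction. Write $e=(1,\ldots,1)\in\mathbb{R}^{n}$. Suppose $\max_{\overline\Omega}(v-u)=:2\delta_0>0$; since $v-u\le0$ on $\partial\Omega$ and $v-u$ is USC on the compact set $\overline\Omega$, this maximum is attained at an interior point.

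Fix $a>c^{*}=(C_{n}^{k})^{-1/k}$ and put $w(x)=\tfrac a2|x|^{2}$, a smooth $k$-convex function with $D^{2}w=aI$ and $\sigma_k(\lambda(D^{2}w))^{1/k}=a(C_{n}^{k})^{1/k}=:1+\varepsilon_0>1$. Choose $M$ so large that $w-M\le u$ on $\overline\Omega$ (possible, as $u$ is LSC hence bounded below on $\overline\Omega$), and then $\theta\in(0,1)$ close enough to $1$ that, with $v_\theta:=\theta v+(1-\theta)(w-M)$, one still has $v_\theta\le u$ on $\partial\Omega$ while $\max_{\overline\Omega}(v_\theta-u)>0$. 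Then $v_\theta$ is a strict subsolution: if $\psi\in C^{2}$ touches $v_\theta$ from above at $\bar x\in\Omega$, then $\theta^{-1}\big(\psi-(1-\theta)(w-M)\big)\in C^{2}$ touches $v$ from above at $\bar x$, so $\sigma_k(\lambda(D^{2}\psi(\bar x)-(1-\theta)aI))\ge\theta^{k}$; testing against $\psi+\epsilon|x-\bar x|^{2}$ for every $\epsilon\ge0$ shows moreover that the eigenvalues of $\theta^{-1}(D^{2}\psi(\bar x)-(1-\theta)aI)$ lie in the open G\aa rding cone $\Gamma_k$ (the corresponding ray stays in $\{\sigma_k>0\}$ and eventually enters $\Gamma^{+}$). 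Since $D^{2}\psi(\bar x)$ is thus a convex combination of two matrices with eigenvalues in the convex cone $\Gamma_k$, concavity of $M\mapsto\sigma_k(\lambda(M))^{1/k}$ on $\Gamma_k$ (see \cite{cns,t1}) gives $\sigma_k(\lambda(D^{2}\psi(\bar x)))^{1/k}\ge\theta+(1-\theta)(1+\varepsilon_0)=:1+\gamma>1$ and $\lambda(D^{2}\psi(\bar x))\in\Gamma_k$.

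Now double: for $\varepsilon>0$ let $(x_\varepsilon,y_\varepsilon)$ maximize $v_\theta(x)-u(y)-\tfrac1{2\varepsilon}|x-y|^{2}$ on $\overline\Omega\times\overline\Omega$. By the usual lemma, $\tfrac1\varepsilon|x_\varepsilon-y_\varepsilon|^{2}\to0$ and, along a subsequence, $x_\varepsilon,y_\varepsilon\to\hat x$, an interior maximizer of $v_\theta-u$, so $x_\varepsilon,y_\varepsilon\in\Omega$ for small $\varepsilon$. The theorem on sums (see \cite{ishii}) provides symmetric $X,Y$ with $(p_\varepsilon,X)\in\overline J^{2,+}v_\theta(x_\varepsilon)$, $(p_\varepsilon,Y)\in\overline J^{2,-}u(y_\varepsilon)$, $p_\varepsilon=\tfrac{x_\varepsilon-y_\varepsilon}\varepsilon$, and $\bigl(\begin{smallmatrix}X&0\\0&-Y\end{smallmatrix}\bigr)\le\tfrac3\varepsilon\bigl(\begin{smallmatrix}I&-I\\-I&I\end{smallmatrix}\bigr)$; testing on the diagonal gives $X\le Y$. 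Passing the strict-subsolution property of $v_\theta$ to the closure of the superjet, $\lambda(X)\in\overline\Gamma_k$ and $\sigma_k(\lambda(X))\ge(1+\gamma)^{k}$. Because $Y-X\ge0$ and, by the ellipticity facts used in Section~\ref{sec0} (positivity of $\partial\sigma_j/\partial M$ on $\Gamma_j$, $j\le k$), the set $\overline\Gamma_k$ is invariant under adding positive semidefinite matrices with $\sigma_k$ nondecreasing along such additions, we obtain $\lambda(Y)\in\overline\Gamma_k$ and $\sigma_k(\lambda(Y))\ge(1+\gamma)^{k}>0$; since $\partial\Gamma_k\subset\{\sigma_k=0\}$ this forces $\lambda(Y)\in\Gamma_k$. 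As $\Gamma_k$ is open we may choose a $k$-convex $C^{2}$ function touching $u$ from below at $y_\varepsilon$ with Hessian arbitrarily close to $Y$, and the supersolution property of $u$ gives, in the limit, $\sigma_k(\lambda(Y))\le1$, contradicting $\sigma_k(\lambda(Y))\ge(1+\gamma)^{k}>1$. Hence $v\le u$ in $\Omega$.

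I expect the main obstacle to be the asymmetry in the definitions: the supersolution may only be tested against \emph{$k$-convex} functions, so before invoking it one must establish that the matrix $Y$ produced by the theorem on sums actually has $\lambda(Y)$ in the \emph{open} cone $\Gamma_k$. This is exactly where the monotonicity of $\sigma_k$ under positive semidefinite perturbations on $\overline\Gamma_k$, together with the strict positivity $\sigma_k(\lambda(Y))\ge(1+\gamma)^{k}$ gained in the perturbation step, is needed. The remaining ingredients (the doubling lemma, the theorem on sums, and the concavity and positive ellipticity of $\sigma_k^{1/k}$ on $\Gamma_k$) are standard.
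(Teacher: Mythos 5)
Your proof is correct. The paper itself does not write out an argument: it merely observes that the continuous case $u,v\in C^{0}(\overline\Omega)$ is proved by Urbas in \cite{u} using the Jensen approximation (sup/inf-convolution) technique of \cite{jen}, and remarks that the same argument extends to semicontinuous $u,v$. You instead give a self-contained proof via doubling of variables and the Crandall--Ishii ``theorem on sums,'' after first perturbing $v$ to a strict subsolution $v_\theta$ (needed because \eqref{sigmak} has no zeroth-order term). The two routes are close relatives --- the theorem on sums is itself built out of Jensen's lemma and sup/inf-convolutions --- so neither is substantially more elementary; yours is simply the ``user's guide'' \cite{cil} packaging. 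The point your write-up handles carefully, and which is the genuinely delicate issue for Hessian equations, is the asymmetry in the paper's definitions: supersolutions may only be tested against $k$-convex $C^{2}$ functions. You first upgrade the superjet estimate to $\lambda(X)\in\Gamma_k$ (not merely $\sigma_k(\lambda(X))\ge1$) via the ray $\psi+\epsilon|x-\bar x|^{2}$, $\epsilon\ge0$; then the matrix ordering $X\le Y$ from the theorem on sums and the ellipticity of $\sigma_j$ on $\Gamma_j$, $j\le k$, push $\lambda(Y)$ into $\overline\Gamma_k$, while the strict gap $\sigma_k(\lambda(Y))\ge(1+\gamma)^{k}>0$ produced by the perturbation rules out $\lambda(Y)\in\partial\Gamma_k\subset\{\sigma_k=0\}$, so a $k$-convex test function for $u$ at $y_\varepsilon$ is indeed available. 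That is precisely the step that would otherwise fail, and you identified and closed it. (Minor: the declared $e=(1,\dots,1)$ is never used.)
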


Under the assumptions $u,v\in{C}^{0}(\overline{\Omega})$, the lemma
was proved in \cite{u}, based on Jensen approximations (see
\cite{jen}). The proof remains valid under the weaker regularity
assumptions on $u$ and $v$.

\begin{lemma}\label{lemmaB}
Let $\Omega\subset\mathbb{R}^{n}$ be an open set, and let
$\mathcal{S}$ be a non-empty family of viscosity subsolutions
(supersolutions) of \eqref{sigmak} in $\Omega$. Set
$$u(x)=\sup~(\inf)~\left\{v(x)~|~v\in\mathcal{S}\right\},$$
and
$$u^{*}~(u_{*})~(x)=\lim_{r\rightarrow0}\sup_{B_{r}}~(\inf_{B_{r}})~u$$
be the upper (lower) semicontinuous envelope of $u$. Then, if
$u^{*}<\infty$ ($u_{*}>-\infty$) in $\Omega$, $u^{*}$ ($u_{*}$) is a
viscosity subsolution (supersolution) of \eqref{sigmak} in $\Omega$.
\end{lemma}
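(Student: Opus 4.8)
The plan is to prove the subsolution assertion; the supersolution one is entirely parallel and I will indicate the needed modifications at the end. So let $\mathcal S$ be a nonempty family of viscosity subsolutions of \eqref{sigmak} in $\Omega$, put $u=\sup\{v\mid v\in\mathcal S\}$, and assume $u^*<\infty$ in $\Omega$; then $u^*$ is real-valued and upper semicontinuous, with $v\le u\le u^*$ in $\Omega$ for every $v\in\mathcal S$. I would fix $\bar x\in\Omega$ and $\psi\in C^2$ with $\psi(\bar x)=u^*(\bar x)$ and $\psi\ge u^*$, and aim to show $\sigma_k(\lambda(D^2\psi(\bar x)))\ge1$. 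First I would replace $\psi$ by $\psi(x)+|x-\bar x|^4$ to arrange in addition that $\psi>u^*$ on $\Omega\setminus\{\bar x\}$; this does not alter $D^2\psi(\bar x)$, so it costs nothing. Then fix $\rho>0$ with $\overline{B_\rho(\bar x)}\subset\Omega$.

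Next, using the definition of $u^*$ I would pick $x_j\to\bar x$ with $u(x_j)\to u^*(\bar x)$, and then $v_j\in\mathcal S$ with $v_j(x_j)>u(x_j)-\tfrac1j$. Since $v_j-\psi$ is upper semicontinuous on the compact set $\overline{B_\rho(\bar x)}$, it attains its maximum there at some point $y_j$. The key estimates are $(v_j-\psi)(y_j)\ge(v_j-\psi)(x_j)>u(x_j)-\tfrac1j-\psi(x_j)\to0$ and $(v_j-\psi)(y_j)\le u^*(y_j)-\psi(y_j)\le0$, which together force $(v_j-\psi)(y_j)\to0$; and if a subsequence of $y_j$ converged to some $y_\infty\ne\bar x$, upper semicontinuity of $u^*$ and continuity of $\psi$ would give $0\le u^*(y_\infty)-\psi(y_\infty)<0$ by the strict touching, a contradiction. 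Hence $y_j\to\bar x$, so $y_j\in B_\rho(\bar x)$ for all large $j$.

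Then I would set $c_j:=v_j(y_j)-\psi(y_j)\le0$ and check that $\psi+c_j$ is a legitimate global test function for $v_j$: on $\overline{B_\rho(\bar x)}$ one has $\psi+c_j\ge v_j$ because $y_j$ maximises $v_j-\psi$ there, while on $\Omega\setminus\overline{B_\rho(\bar x)}$ one has $\psi-v_j\ge\psi-u^*\ge\rho^4>-c_j$ once $j$ is large enough that $-c_j<\rho^4$; moreover $(\psi+c_j)(y_j)=v_j(y_j)$ and $y_j$ is an interior point. Since $v_j$ is a viscosity subsolution of \eqref{sigmak}, the definition gives $\sigma_k(\lambda(D^2(\psi+c_j)(y_j)))=\sigma_k(\lambda(D^2\psi(y_j)))\ge1$, and letting $j\to\infty$ with $y_j\to\bar x$, $\psi\in C^2$ and $\sigma_k\circ\lambda$ continuous yields $\sigma_k(\lambda(D^2\psi(\bar x)))\ge1$. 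This proves $u^*$ is a viscosity subsolution of \eqref{sigmak} in $\Omega$.

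For the supersolution case I would run the symmetric argument with $u=\inf\{v\}$, so that $v\ge u\ge u_*$, testing against a $k$-convex $\psi\in C^2$ touching $u_*$ from below at $\bar x$ and seeking $\sigma_k(\lambda(D^2\psi(\bar x)))\le1$; the roles of sup/max/usc are replaced by inf/min/lsc and the perturbation used for strict touching is $\psi(x)-\varepsilon|x-\bar x|^4$. Two points are special to the Hessian operator. Since $\Gamma_k$ is open and is a connected component of $\{\sigma_k>0\}$, one has $\sigma_k\equiv0$ on $\partial\Gamma_k$; so if $\lambda(D^2\psi(\bar x))\in\partial\Gamma_k$ the inequality is automatic and I may assume $\lambda(D^2\psi(\bar x))\in\Gamma_k$. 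Then, using that the supersolution property is local I may take $\Omega$ to be a small ball about $\bar x$, and for $\varepsilon$ and the radius small the perturbation $\psi(x)-\varepsilon|x-\bar x|^4$ is itself $k$-convex on that ball (its Hessian stays near $D^2\psi(\bar x)\in\Gamma_k$, which is open), hence an admissible test function touching $u_*$ strictly from below at $\bar x$; with this the proof proceeds exactly as above, using minimum points $y_j$ of $v_j-\psi$ and the supersolution inequality for the $v_j$. The main difficulty, such as it is, is precisely this bookkeeping around the contact point: forcing $y_j\to\bar x$ via the strict‑touching reduction together with the semicontinuity of $u^*$ and $u_*$, and, for supersolutions, ensuring that the perturbed test functions remain genuine admissible $k$-convex functions; everything else is soft.
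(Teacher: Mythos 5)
Your proof is correct and is exactly the ``standard argument'' that the paper itself declines to write out, deferring instead to Crandall--Ishii--Lions \cite{cil} (this is Lemma 4.2 of the User's Guide, adapted to the $k$-Hessian). The subsolution half---strict touching via $\psi+|x-\bar x|^4$, a maximizing sequence $x_j$ for $u$, near-optimal $v_j\in\mathcal S$, interior maximum points $y_j\to\bar x$ of $v_j-\psi$, and passage to the limit using continuity of $\sigma_k\circ\lambda$---is carried out cleanly and matches what the citation is meant to convey. You are also right to flag the supersolution case as the only place where the specific structure of the operator matters: the test function must be $k$-convex, so the naive perturbation $\psi-\varepsilon|x-\bar x|^4$ is not automatically admissible, and your reduction (dispose of $\lambda(D^2\psi(\bar x))\in\partial\Gamma_k$ since $\sigma_k\equiv0$ there, otherwise use openness of $\Gamma_k$ to keep the perturbed Hessian in the cone on a small ball) is the correct fix.

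The one step you pass over a bit quickly is the phrase ``using that the supersolution property is local I may take $\Omega$ to be a small ball.'' The paper's definition of supersolution requires the $k$-convex test function to lie below $u$ on the whole domain, so localizing to a ball $B$ and producing a $k$-convex comparison function only on $B$ requires the (true, but not entirely trivial) equivalence of the global-touching and local-touching definitions for $k$-convex test functions; simply truncating or cutting off a $k$-convex $\psi$ can leave the cone $\overline{\Gamma_k}$. This is itself a standard fact, so it is not a gap in substance, but since you rely on it in exactly the place you identify as ``the main difficulty,'' it would be worth a sentence either proving the localization (e.g.\ by adding to $\psi$ a fixed $k$-convex function of $|x-\bar x|$ that is very negative outside $B$ and flat to fourth order at $\bar x$) or explicitly citing it.
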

Lemma \ref{lemmaB} can be proved by standard arguments, see e.g.
\cite{cil}. With these ingredients, an application of the Perron
process, see e.g. Lemma 4.4 in \cite{cil}, gives that
$u\in{C}^{0}(\mathbb{R}^{n}\setminus{D})$ is a viscosity solution of
\eqref{dirichlet}. Theorem \ref{thm1} is established.
\end{proof}

\noindent{\bf{\large Acknowledgements.}} The first author was
partially supported by NNSF (11071020) and SRFDPHE (20100003110003).
He also would like to thank the Department of Mathematics and the
Center for Nonlinear Analysis at Rutgers University for the
hospitality and the stimulating environment. The second author was
partially supported by SRFDPHE (20100003120005), NNSF (11071020)
(11126038) and Ky and Yu-Fen Fan Fund Travel Grant from the AMS. The
work of the third author was partially supported by NSF grant
DMS-0701545. They were all partially supported by Program for
Changjiang Scholars and Innovative Research Team in University in
China.

\end{document}